\newcommand{\R}{\ensuremath{\mathbb R}}
\newcommand{\Rn}{\R^n}
\newtheorem{theorem}{Theorem}
\newtheorem{lemma}[theorem]{Lemma}
\newtheorem{proposition}[theorem]{Proposition}
\newtheorem{summary}[theorem]{Summary}
\newtheorem{defn}[theorem]{Definition}
\newtheorem{rem}[theorem]{Remark}
\def \sP {{\mathcal{P}}}
\def \cont {{\mathcal{C}}}
\def \grad {\nabla}
\def \diag {\text{diag}}
\def \span {\text{span}}
\theoremstyle{definition}
\begin{document}

\title{Double Descent and Intermittent Color Diffusion \\ for Global Optimization and landscape
exploration}

\author{Luca Dieci, Manuela Manetta, Haomin Zhou}
\subjclass{65H99, 65K99}
\thanks{%The first author 
%acknowledges the support provided by
%a Tao Aoqing Visiting Professorship at Jilin University, Changchun (CHINA).  
The second and third authors work was supported under
NSF Awards DMS-1419027, DMS-1620345 and ONR Award N000141310408.}

\keywords{Double descent, color noise, intermittent diffusion, optimization}

%\null\hfill Version of \today $, \,\,\,$ \xxivtime

\begin{abstract}
In this work, we present a method to explore the landscape of a smooth
potential in the search of global minimizers,
combining a double-descent technique and a basin-escaping technique
based on intermittent colored diffusion.  Numerical results illustrate
the performance of the method.
\end{abstract}

\maketitle

\pagestyle{myheadings}
\thispagestyle{plain}
\markboth{LUCA DIECI AND MANUELA MANETTA AND HAOMIN ZHOU}{Double Descent and Intermittent Color Diffusion}

\section{Introduction}\label{Intro}
Let $g:\Rn\to \R$, $n\ge 1$, be a sufficiently smooth function (say, $\cont^\infty$);
we call $g$ the ``potential,'' or ``objective function.''
Let $\grad g$ be the gradient of $g$, and $H$ be the Hessian.  
Finally, we also let $G: \Rn \to \R^+$ be defined as
$G=\frac12 (\grad g)^T (\grad g)$; we call $G$ the ``auxiliary potential.''
Our goal is to minimize $g$.

Finding global minimizers for a general objective function $g$ is one of the
oldest and most
challenging problems in applied mathematics.   Whereas it is at times possible
to exploit a-priori knowledge for specific potentials, it remains
an outstanding task to devise effective general optimization
strategies which can be applied to a general problem.
In the literature, one finds extensive collections of
real-world potentials, arising from chemistry,
physics, mathematics, etc., as well as artificial problems.   Many challenging
problems in the
first group are obtained as models of interatomic forces, and are
distinguished by having a reduced region of interest, expensive computation of
the potential and its gradient, and full (not sparse) Hessians.  Problems in
the second group are useful to
validate an optimization technique, to illustrate it, and to have objective functions with
selectively distinguished features: a single
global minimum, multiple global minima in the presence of many local minima (in which case
any deterministic technique will be trapped in the basin of attraction of a minimizer
without being able to escape it), long narrow valleys (which will slow down the search
process), and flat surfaces.  Unsurprisingly, some methods perform well on some problems,
and poorly on others, and -aside from knowing ahead of time what method to use on
a specific potential- one is left wandering on what to use for a given problem.  

We are often confronted with
this frustrating state of affairs when teaching a course on 
numerical methods for optimization.  Even absolutely marvelous textbooks (e.g.,
\cite{DS}) are ultimately having to accept some 
uncertainties, and to deal with fine-tuning of parameters, and empirical choices.  To be fair,
these difficulties are intrinsic to the task at hand, and surely not
the result of negligence.  So, when we teach such a course,
we end up teaching local techniques, maybe continuation and embedding techniques,
emphasize gradient descent and Newton's method and their variants, 
stress convex or maybe polynomial functions, but in the end we
fail at providing rigorous answers to the recurring questions of alert students
relative to a general smooth function $g$:
``how do we know that we found the global minimum?  how do we know that we have
visited the interesting regions of configuration space?''   We don't know, and most
likely we will not know for the foreseeable future.  Indeed, borrowing a painstaking
and extensive search of the configuration space, we have few hints to offer
to our students for answering their questions above.
Motivated by our classroom experience, 
one of our purposes in this work is to present methods and ideas that can be
taught in a numerical optimization course.  That said, quite honestly,
we have no pretense that our work is an answer to
the above questions,  but we are hopeful to be taking a (small) step in the
right direction.

% to develop sophisticated algorithms.\\

Let us immediately stress that we are putting forth some ideas for a general purpose
method, one which does not rely on the specific properties of the potential.
With this purpose in mind, we may recall that the main components of a global minimization
algorithm are: to explore the landscape, and to locate minima.  
The shape of the level sets of $g$ is of course dictating the nature
of the landscape: flat, rough, predictable, crowded with minima. 
At the same time, the shape of the level sets of $G$ is more directly
responsible for properly identifying the basins of attraction of the minima of $g$
for important techniques, such as Newton's method.  
Naturally, the level sets of $g$ and of $G$ are often of a very different nature;
for example, in   \ref{MoleiConts}
we show them relative to the function of Example \ref{Ex:Molei}.

\begin{figure}[hbt]%\label{goal}
\begin{subfigure}{0.45\textwidth}
\includegraphics[scale=0.4]{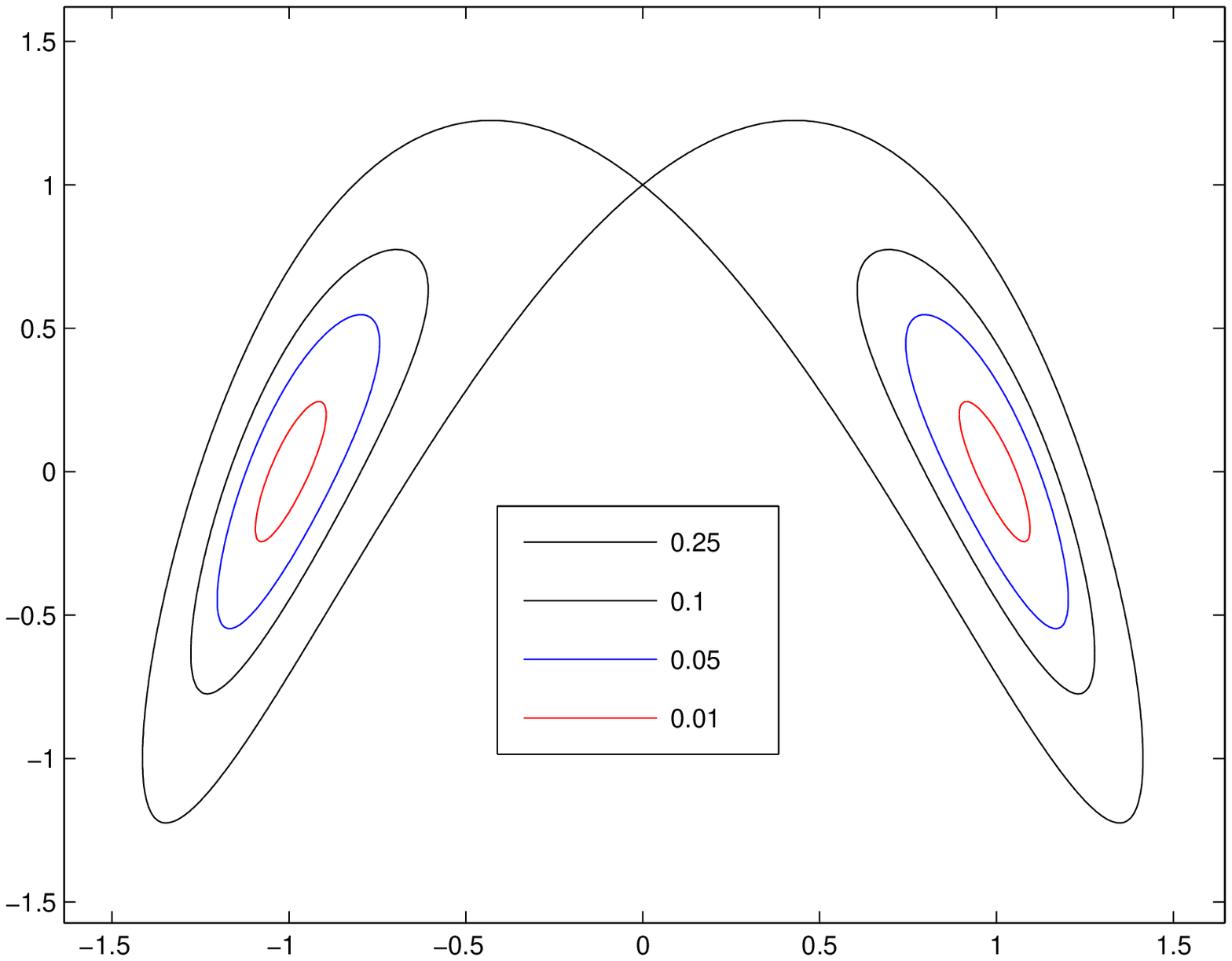}
\end{subfigure}
\begin{subfigure}{0.45\textwidth}
\includegraphics[scale=0.4]{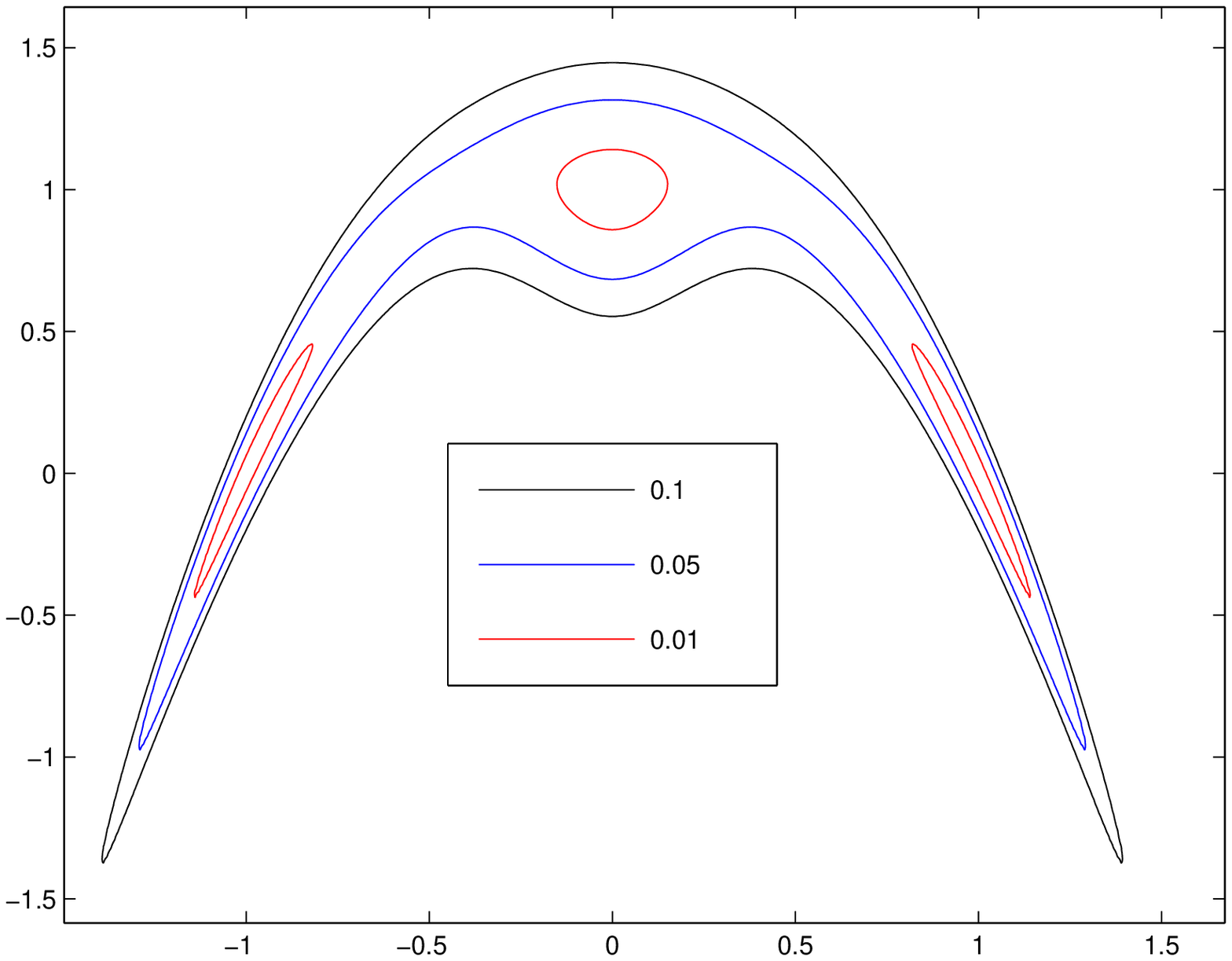}
\end{subfigure}
\caption{\small{Level sets for $g$ (on the left) and $G$ (on the right);
Example \ref{Ex:Molei}}}\label{MoleiConts}
\end{figure}

Regardless, unfortunately,
graphical insight provided by the level sets is all but unavailable for problems
in several variables, and exploration of the landscape remains a mix of
randomization and subdivision ideas.  Indeed, the most widely
adopted exploratory technique appears to be Monte-Carlo, which is then
combined with localization methods such as gradient descent,
Newton's method, and their modifications. For example, the popular ``basin-hopping''
methods randomly perturb a minimum and search for other minima by gradient descent 
(e.g., see \cite{LennJ}).  And, of course, the best known instance of a method
exploiting stochastic effects is simulated annealing, which makes
use of a probability function to decide how to move in the search space; briefly, the 
method can be described as follows: call $x$ the current state, $x_{new}$
a randomly selected neighbor of $x$, and let the probability function be given by
\begin{equation*}\label{SA}
P(x,x_{new},T)=\left\{ \begin{array}{ll}
1, & \text{ if }  g(x_{new})<g(x),\\
exp\left(\frac{-[g(x_{new})-g(x)]}{T}\right), & \text{ if } g(x_{new})>g(x),
\end{array}\right.
\end{equation*}
where $T$ is called the temperature and is a function of the ratio between the current iteration
number and the total number of iterations allowed.  Given this setting, a random number $r$ is generated:
if the probability to move from $x$ to $x_{new}$ is greater than $r$, then 
the new state is accepted,  otherwise it is rejected; note that a downhill direction will
always be accepted, though one may also take uphill steps.
This method is inherently better suited
for discrete problems, and it is sequential in nature
(see \cite{BT} for more details on simulated annealing).
 %Our aim is to build up a method
%able to follow different trajectories instead.

To a large extent, everyone needs to deal with the key two aspects above:
how to explore the landscape, and how to locate minima once the decision is made
that there is one in the vicinity of the iterates.  Our goal is to do so
while avoiding a purely Monte Carlo technique, but rather using
the information gathered at the critical points to move in an
educated way in the landscape exploration.

In this work we introduce a method which uses a combination of new techniques, namely a
double-descent method to search for minima, and an intermittent colored diffusion
technique to escape the basins of attraction of critical points. 

All along, we will tacitly assume that critical points (i.e., values $x$ where $G(x)=0$)
are simple, in the sense that the Hessian is invertible there.  
In particular, at minima, the associated Hessian will be positive definite.
Nevertheless, the proposed method will also be
able to solve problems where at the critical point the Hessian has eigenvalues equal to $0$.

We conclude this introduction with some practical considerations.

\begin{itemize}
\item[(i)]  Although we are considering  an unconstrained minimization problem, the
case of constrained optimization is of course also important, and we expect to
consider it in the future.
%practically the search region (the configuration space) will be a compact set, say
%a hypercube.  In our implementation, we simply stop the iteration if we reach the boundary
%of this set.  %(The interesting case of constrained 
%minimization problems will be considered in future work.)
\item[(ii)]
Of the many minimization methods proposed during the years, some
use only gradient information, some also Hessian, and some use only
functional evaluations (so-called direct search algorithms).  Whereas,
of course, the specific problem at hand may inhibit using the gradient and/or
the Hessian, we will assume that these are available to us.
In fact, in our technique, we make use of repeated eigen-decomposition of the Hessian.
Of course, this is an expense which may be prohibitive for truly large problems,
though by today standards it is easily doable for dimensions of up
to a few hundreds.  It is not by coincindence
that a lot of people have been concerned with efficient
updating of Hessian factorizations (e.g., the BFGS (Broyden-Fletcher-Goldfarb-Shanno) or 
the DFP (Davidon-Fletcher-Powell) updates); see \cite{GMW}.
\item[(iii)]
The prevailing wisdom (e.g., see the well known Levenberg-Marquardt
algorithm, trust-region methods, and the discussion in \cite{DS} and \cite{Kelley})
is to do Newton's
method near a minimizer.  Our technique is designed to automatically do Newton's
method as well, as we reach the neighborhood of a minimize, or of another critical
point.
\item[(iv)]
Many recent advances in global optimization (e.g., genetic algorithms,
direct search techniques, multiple random initializations) have found their place
in public domain software; e.g., see \cite{Holm} and the 
{\tt Matlab} Global Optimization Toolbox.  In particular, 
the latter contains two routines which we have used for cross-comparison of
our results: {\tt GlobalSearch} 
and {\tt simulannealbnd}.   The function {\tt simulannealbnd}
is the {\tt Matlab} implementation of simulated annealing.  
Instead, {\tt GlobalSearch} finds minimizers at different stages: 
first a local search (carried out by the function {\tt fmincon}) starts from an initial point $x_0$
provided by the user, and then a list of trial point is generated as potential starting points,
taking into account penalty functions, spherical basins of attractions, and run-time, to eventually
perform the local search from a large number of initial points.  (For fair comparison with
the results of our method, we used this function by
providing gradient and Hessian information). 
\item[(v)]
Finally, we must stress that it is very delicate to implement any method, and that methods
that look good on paper will not deliver according to expectations, 
if not properly implemented.  For this reason, we will detail our implementation
choices so that our results may be replicated.
\end{itemize}

A plan of this paper is as follows.  In section \ref{prelims} we give some background
material.  In section \ref{descents} we introduce the double-descent technique, and
in section \ref{DD-CID} we give the combined ``double-descent-color-intermittent-diffusion''
method (DD-CID, for short).  An overview of our numerical method is in
section \ref{glance}, and several numerical experiments are reported
in section \ref{Expers}.

\section{Preliminaries}\label{prelims}

\subsection{Intermittent Diffusion}\label{IntDiff}

In the recent work \cite{ID13}, the authors proposed a general methodology,
called intermittent diffusion (ID, for short), motivated by the
fact that the most widely used stochastic technique available for global optimization,
the simulated annealing mentioned before, needs a deterministic part to speed up the
convergence towards the minimizers. In order to do so, ID alternates between gradient descent and
diffusion processes, by turning on and off a white noise term.  In mathematical terms,
the ID methodology can be summarized by the following stochastic differential equation:
\begin{equation}\label{IDsde}
dx(t,w) = -\nabla g( x(t,w)) dt+ \sigma(t) dW(t), \; t\in  [0,+\infty]
\end{equation}
where $W(t)$ is Brownian motion in $\R^n$, $w$ is a random path in the Wiener space and $\sigma(t)$ is a 
piecewise constant function of time alternating between positive and zero values. 
In particular, when the noise is off ($\sigma(t)=0$), the method reduces to the gradient descent technique, 
leading the trajectory towards a minimizer of the potential,  when the noise is on ($\sigma(t) >0$), 
the trajectory should leave a neighborhood of the minimizer %to explore the landscape of the function 
and, eventually, reach the basins of attraction of different minima.\\

The discontinous diffusion term is given in \cite{ID13} as
\begin{equation}\label{IDnoise}
\sigma(t)= \sum_{i=1}^N \sigma_i I_{[S_j,T_j]}(t)
\end{equation}
where $0=S_1<T_1<S_2<T_2 \dots <S_N<T_N<S_{N+1}=T$ and $I_{[S_j,T_j]}$ is the characteristic function 
of the interval $[S_j,T_j]$.  In \cite{ID13},
the length of the intervals $T_j-S_j$, and the constants $\sigma_j$ are supposedly chosen
randomly for all $j=1, \dots, N$;
therefore, when the characteristic function is $1$, the minimizer is perturbed by a positive random
number for a certain amount of time, %to escape the trap of the stationary point just found, 
and when the noise is off, namely $I_{[S_j,T_j]}=0$, the method reduces to gradient descent 
and slowly converges to a local minimum. 
%One of the main challenges of this technique is selecting the length of the perturbation interval, 
%namely, how long the piecewise continuous function should be positive and when it is more appropriate 
%to set it to zero, activating the descent technique.

As originally proposed, the ID approach 
is a general methodology, % which can be applied to a wide variety of optimization problems, 
but to make it become a practical method requires making a lot of choices; for example, to
decide how long the diffusion process needs to be carried out.  
%to be sure to escape the neighborhood of a minimizer. 
In our experimentation of this technique, we used the discrete analog of \eqref{IDsde}:
\begin{equation}\label{IDsdeDiscrete}
x_{k+1}= x_k-h \nabla g(x_k)+ +\sqrt{h} \sigma(t_k) W\ ,
\end{equation}
where $W\in N(0,1)^n$.  However, when using this technique, we
faced the need to adjust too many parameters based on the potential we were trying to minimize, 
and realized that there were some key aspects to be addressed:
\begin{itemize}
\item the local convergence towards minima, using the gradient descent, was too slow,
and a faster method (eventually, Newton-like) was desirable;
\item white noise based diffusion did not account for the local landscape of the potential,
and we eventually wanted to modify this with color noise diffusion;
%an educated path to escape the basin of attraction of the minimizers by 
%substituting the white noise with a colored noise.
\item criteria were needed to replace choosing the interval length randomly, finding instead a deterministic
criterion to switch the noise on and off.
\end{itemize}
We addressed all of the above concerns in the present paper.

%In fact, in case of highly multimodal functions a little perturbation was enough move away from 
%a minimizer, and triggering it for a long time could 
In order to achieve our goal to build up a method which automatically adjusts to the 
optimization problem, we resorted to exploiting the Hessian's spectral information.% of the potential. 

\subsection{$\Rn$ via the Hessian}
Below, we clarify the structure of regions of $\Rn$ where the eigenvalues
of $H$ have a specified signature (inertia).

\begin{defn}\label{SignHess}
Given a symmetric matrix $H$, the inertia of $H$ is the triplet 
$$\nu(H)=\{n_+(H),n_0(H),n_-(H)\},$$ where $n_+, n_0, n_-$, are the number
of positive, zero, or negative, eigenvalues of $H$, counted with their
multiplicities.   $H$ is called hyperbolic if $n_0(H)=0$.
\end{defn}
Observe that $H:\Rn \to \R^{n\times n}$ is a
smooth symmetric function of $n$ parameters, hence the reasonings below
are valid. 
\\
We will always order the eigenvalues of $H$ as
$\lambda_1\ge \lambda_2\ge\cdots \ge \lambda_n$, and $v_1,\dots ,v_n$, will
be the corresponding orthogonal eigenvectors.  According to $\nu(H)$, we
will also use the notation $V=[v_1,\  \dots\ , \ v_n]=[V_+ ,\  V_0 , \ V_-]$,
and will call $V_+$ the basis for the positively dominant subspace, or simply
(with improper language) the dominant subspace, etc..

\begin{lemma}\label{PropertiesHess}
Consider the set $\sP:=\{x\in \Rn\ : \ y^TH(x)y>0\ ,\, \forall y\in \Rn\}$.
We have the following properties:
\begin{enumerate}
\item $\sP$ is open.
\item $\sP=\cup_i \sP_i$, where each $\sP_i$ is open and connected and 
$\sP_i\cap \sP_j=\emptyset$ for $i\ne j$.
\item Each $\sP_i$ is path connected.
\end{enumerate}
\end{lemma}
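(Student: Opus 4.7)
The plan is to reduce all three assertions to elementary facts about $\Rn$ combined with continuity of the Hessian. Since $g \in \cont^\infty$, the map $x \mapsto H(x)$ is continuous (indeed smooth) as a map from $\Rn$ into the space of real symmetric $n\times n$ matrices, and this is the only regularity input I will need.

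For (1), I would rewrite $\sP = \{x \in \Rn : \lambda_n(x) > 0\}$, where $\lambda_n(x)$ denotes the smallest eigenvalue of $H(x)$ (using the ordering convention already introduced). Since the eigenvalues of a symmetric matrix depend continuously on its entries, the composition $x \mapsto \lambda_n(x)$ is continuous, and $\sP$ is the preimage of the open set $(0,\infty)$ under a continuous map, hence open. Equivalently, the cone of positive definite symmetric matrices is open in the space of symmetric matrices, and $\sP$ is its preimage under the continuous $H(\cdot)$.

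For (2), I would invoke the standard topological fact that any subset of a topological space decomposes uniquely into its connected components, which are by construction pairwise disjoint and connected. What remains is the openness of each $\sP_i$. This follows from local connectedness of $\Rn$: every connected component of an open subset of a locally connected space is open. One can argue directly as well: for any $x \in \sP_i$, pick an open ball $B \subset \sP$ centered at $x$ (possible by (1)); since $B$ is connected and meets $\sP_i$, we have $B \subset \sP_i$, so $\sP_i$ is open.

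For (3), I would observe that every open subset of $\Rn$ is locally path-connected, since open balls are path-connected. A connected, locally path-connected space is path-connected by the standard clopen argument: fix $x_0 \in \sP_i$, let $U$ be the set of points of $\sP_i$ joinable to $x_0$ by a continuous path in $\sP_i$, note that local path-connectedness makes both $U$ and $\sP_i \setminus U$ open, and conclude $U = \sP_i$ by connectedness. No step is genuinely an obstacle; the only point that deserves more than a sentence is the continuity statement underlying (1), and even that is classical.
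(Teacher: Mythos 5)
Your proof is correct and follows essentially the same route as the paper's: openness via continuity of the smallest eigenvalue $\lambda_n(\cdot)$, decomposition into connected components for (2), and the classical fact that open connected subsets of $\Rn$ are path connected for (3). If anything, your version of (2) is cleaner, since you justify openness of the components via local connectedness rather than the paper's somewhat circular description of the $\sP_i$ in terms of curves along which $\lambda_n>0$.
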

\begin{proof}
(1) follows from these considerations.  
\begin{itemize}
\item[(i)]  The eigenvalues of the function $H$ are continuous functions of $x$.
(This is a standard result).
\item[(ii)]  If $A$ is a symmetric positive definite function,
and $B=B^T$ is such that $\|A-B\|_2<\lambda_n(A)$,
then $B$ is positive definite.  (This is also well known). %; a proof is in \cite{DieciLinAlg}).
\end{itemize}
Thus, if we take a value $x_0$ where $H(x_0)$ is pos-def, 
consider the smallest eigenvalue of $H$ as a function of $x$, and
use the continuity of $\lambda_n(\cdot)$, then
$H(\cdot)$ must remain positive definite for $x\in B_r(x_0)$ 
(an open ball centered at $x_0$, of radius $r$): $B_r(x_0)=\{y\in \Rn\ :\, \|x-y\|<r\}$,
or also $B_r(x_0)=\{x\in \Rn\ :\, x=x_0+\rho w,\ \|w\|=1,\ \rho<r\}$. \\
As far as (2), 
the observation is that the function $H$ ceases to be positive definite when 
$\lambda_n(x)=0$.  So, we define a set $\sP_i$ as that component of $\sP$ such
that for any two $x,y\in \sP_i$ there is a curve joining $x$ and $y$ such that
along this curve $\lambda_n>0$.  As above, $\sP_i$ is open, and thus
the set $\sP$ is separated into open connected components $\sP_i$'s,
and $\sP_i\cap \sP_j=\emptyset$, for $i\ne j$. \\
(3) follows from a classical result in topology, telling that ``open connected sets in $\Rn$ are
path connected''.  [It is also possible to argue directly, since, given that the $\sP_i$'s are open,
an open ball centered at any point $x\in \sP$ must intersect all coordinate directions].
\end{proof}

\begin{rem}
Properties similar to (1)-(2)-(3) above are still true in case the Hessian is hyperbolic.
Indeed, considering the set
$$\sP:=\{x\in \Rn\ : \ n_0(H(x))=0\ ,\, n_+(H(x)) \,\ \text{and}\,\ n_-(H(x))
\,\ \text{constant}\ne 0 \}\ ,$$
this set is open and the union of (path) connected components.
%The arguments for the positive definite case above, of course, must be somewhat changed, but
%the basic fact that perturbation of a hyperbolic Hessian renders an hyperbolic one with
The reason is that perturbation of a hyperbolic Hessian renders a hyperbolic one, with
same inertia, as a consequence of the fact that
invertible matrices form an open set. 
\end{rem}

\subsection{Courant's theorem}
As we will see in the following, a main idea of our method is to escape the basin of
attraction of a minimizer by searching for a saddle point.  Now,
it is well-known that if the potential is a function of one
real variable, $g:\R\to \R$,  and $x_1$ and  $x_2$ are two strict minima,
that is $g''(x_{1,2})> 0$,
then $g$ must have another critical point $x_3$ between $x_1$ and $x_2$. 
However, as soon as we consider a real-valued function of two variables, a similar result
does not hold, in general\footnote{As an example, consider the function 
$g_1(x,y)=(x^2y-x-1)^2+(x^2-1)^2$: it has two local minima at $(1,2)$ and $(-1,0)$,
and no other critical point.}.  
Nevertheless, under certain conditions the existence of other non-extremal critical points
has been proved, and this result, due to Courant, dates back to 1950 (see \cite[p.49]{Jabri},
where $g$ is only assumed to be $\mathcal{C}^1$).
\begin{theorem}\label{Courant}
Suppose that $g$ is coercive\footnote{Recall 
$g:\R^n\rightarrow \R$ is coercive if $\lim_{\|x\|\rightarrow \infty} g(x)= +\infty$, 
that is for any constant $M$ there is a constant $R_M$ such that $\|g(x)\|>M$ 
whenever $\|x\|>R_M$.} and possesses two distinct strict relative minima $x_1$ and $x_2$. 
Then $g$ possesses a third critical point distinct from $x_1$ and $x_2$, characterized by 
$$g(x_3)=\inf_{\Sigma \in \Gamma} \max_{x\in \Sigma} g(x)\ ,$$
where $\Gamma= \{ \Sigma \subset \R^N; \Sigma \mbox{ is compact 
and connected and } x_1,x_2 \in \Sigma\}$.\\
Moreover, $x_3$ is not a relative minimizer; that is in every neighborhood of $x_3$, 
there exists a point $x$ such that $g(x)<g(x_3)$. 
\end{theorem}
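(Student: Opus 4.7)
The plan is to combine a min-max construction with a negative gradient flow (deformation) argument to produce the critical point $x_3$, and then to show by a localized deformation that $x_3$ cannot be a relative minimum.

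First I would verify that $c := \inf_{\Sigma \in \Gamma} \max_{x\in\Sigma} g(x)$ is finite and strictly greater than $\max\{g(x_1), g(x_2)\}$. Finiteness comes from using $\Sigma$ equal to the line segment joining $x_1$ and $x_2$, which is compact, connected, contains both points, and on which $g$ is bounded by continuity. For the strict inequality I would exploit that $x_1$ is a \emph{strict} relative minimum: for $r>0$ small, $g(x)>g(x_1)$ on $B_r(x_1)\setminus\{x_1\}$, so $m_1:=\min_{\partial B_r(x_1)} g > g(x_1)$ by compactness. Any $\Sigma\in\Gamma$ meets $\partial B_r(x_1)$ by connectedness (it has $x_1$ inside and $x_2$ outside), so $\max_\Sigma g\ge m_1 > g(x_1)$, and similarly at $x_2$.

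Second, coercivity confines minimizing sequences to the compact sublevel set $K:=\{x\in\Rn : g(x)\le c+1\}$, on which uniform bounds are available.

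Third, I would show $c$ is a critical value by the classical deformation argument. Assume for contradiction $\grad g(x)\ne 0$ on $\{g=c\}\cap K$; by continuity and compactness, pick $\epsilon,\delta>0$ with $\|\grad g(x)\|\ge\delta$ on the slab $N:=\{x\in K : |g(x)-c|\le\epsilon\}$. Take $\Sigma\in\Gamma$ with $\max_\Sigma g \le c+\epsilon/2$, and let $\phi_t$ be the flow of $-\grad g$, truncated outside a slightly larger compact set to keep trajectories bounded. Since $\grad g(x_1)=\grad g(x_2)=0$, the endpoints are fixed by $\phi_t$, so $\phi_t(\Sigma)\in\Gamma$. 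Integrating $\frac{d}{dt} g(\phi_t(x))=-\|\grad g(\phi_t(x))\|^2$ along trajectories confined to $N$ gives $\max_{\phi_t(\Sigma)} g < c$ for $t$ of order $\epsilon/\delta^2$, contradicting the definition of $c$. Hence some $x_3\in K$ satisfies $g(x_3)=c$ and $\grad g(x_3)=0$.

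Fourth, I would show $x_3$ is not a relative minimum. Suppose for contradiction $g(x)\ge c$ on a ball $B_\rho(x_3)$. The difficulty is that the naive gradient flow cannot decrease $g$ at $x_3$. The fix is to perform a \emph{localized} deformation: choose $\rho$ small enough that no other critical point at level $c$ lies in the closed annulus $A := \{\rho/2 \le \|x-x_3\| \le \rho\}$, so $\|\grad g\|\ge\delta'>0$ on $A\cap\{|g-c|\le\epsilon'\}$. Applied to a minimizing $\Sigma$, the deformation strictly lowers $g$ off $B_{\rho/2}(x_3)$, and one then reroutes the portion of $\Sigma$ trapped inside $B_{\rho/2}(x_3)$ along an arc on $\partial B_{\rho/2}(x_3)$ (using path-connectedness of the sphere), obtaining $\Sigma'\in\Gamma$ with $\max_{\Sigma'} g < c$, a contradiction. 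The main obstacle is precisely this last step: the mountain-pass deformation gives a critical point at the min-max level, but ruling out that $x_3$ is itself a local minimizer requires the extra geometric care of rerouting $\Sigma$ around a putative basin at $x_3$ while preserving compactness, connectedness, and the endpoints $x_1,x_2$, all while strictly decreasing $\max_\Sigma g$.
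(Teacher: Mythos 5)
The paper itself offers no proof of Theorem \ref{Courant} --- it is quoted as a classical result of Courant via \cite[p.49]{Jabri} --- so your argument has to stand on its own. Your steps 1--3 do: they are a correct, standard finite-dimensional mountain-pass argument. The barrier $c>\max\{g(x_1),g(x_2)\}$ via connectedness of $\Sigma$ forcing an intersection with $\partial B_r(x_1)$ is exactly right (take $r<\|x_1-x_2\|$); coercivity makes the slab $\{|g(x)-c|\le\epsilon\}$ compact, so a uniform lower bound $\|\grad g\|\ge\delta$ there follows by a subsequence argument if $\{g=c\}$ carries no critical point; and the negative-gradient flow (well defined here since the paper assumes $g$ smooth --- the $\mathcal{C}^1$ version in \cite{Jabri} needs a pseudo-gradient) fixes $x_1,x_2$, preserves $\Gamma$, and drives $\max_{\phi_t(\Sigma)}g$ below $c$, the desired contradiction. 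You should add explicitly that $x_3\ne x_1,x_2$ because $g(x_3)=c>g(x_i)$.

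The genuine gap is in step 4, and it is twofold. First, the isolation you invoke --- choosing $\rho$ so that no other critical point at level $c$ meets the annulus $\{\rho/2\le\|x-x_3\|\le\rho\}$ --- is not available in general: the critical set at level $c$ may accumulate at $x_3$ along a curve (a level-$c$ ridge of non-strict minima), so every annulus around $x_3$ contains such points. The hypothesis one must contradict is the global one, ``every critical point at level $c$ is a relative minimizer,'' and the argument has to handle the whole compact critical set $K_c=\{x:\ g(x)=c,\ \grad g(x)=0\}$ at once. Second, even granting isolation, rerouting $\Sigma$ along an arc of $\partial B_{\rho/2}(x_3)$ cannot yield $\max_{\Sigma'}g<c$: under your own contradiction hypothesis $g\ge c$ on all of $B_\rho(x_3)$, hence on that sphere, so the rerouted arc still sits at level $\ge c$. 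A repair that actually closes the argument: assume every $y\in K_c$ is a relative minimizer, so $g\ge c$ on $B_{\rho_y}(y)$; then $\{g<c\}$ and $U:=\bigcup_{y\in K_c}B_{\rho_y/2}(y)$ are disjoint open sets, and a quantitative deformation off a neighborhood of $K_c$ shows $\phi_T(\Sigma)\subset\{g<c\}\cup U$ for $T$ large; connectedness of $\phi_T(\Sigma)$ together with $x_1\in\{g<c\}$ forces $\phi_T(\Sigma)\subset\{g<c\}$, contradicting $\max_{\phi_T(\Sigma)}g\ge c$. Proving that $\phi_T(\Sigma)$ really lands in that union (a second-deformation-lemma statement relying on compactness of $K_c$), or alternatively running Courant's original topological argument on the merging of the components of $\{g\le a\}$ containing $x_1$ and $x_2$ as $a\uparrow c$, is the missing work; as written, the ``moreover'' clause is not established.
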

Thus, $x_3$ in Theorem \ref{Courant}
can be viewed as the set which topologically separates $x_1$ and $x_2$.

Theorem \ref{Courant} is part of ``mountain pass theory.'' An
accessible introduction to this theory and its applications
is in \cite{Siam15}, a comprehensive treatment is \cite{Jabri}, and the report
\cite{MoreMunson} and the work \cite{ZhangDu} propose
numerical methods to approximate mountain pass points
(here, the authors use the characterization of mountain pass points as critical points where
the (nonsingular) Hessian has exactly one negative eigenvalue).

%\smallskip

\section{Descent directions and the double-descent}\label{descents}
Here we introduce the double descent direction.
First, we recall the definition of (gradient) descent and Newton's directions.

\begin{itemize}
\item[(a)] ({\sl Descent direction}).  Assuming that $\grad g(x_0)\ne 0$, any
direction $v$ such that $g(x_0+\alpha v)<g(x_0)$, for some $\alpha>0$, is called
a {\sl direction of descent} for the potential $g$.  A trivial computation shows that a
direction of descent
$v$ must satisfy $(\grad g(x_0))^T v<0$.  The classic choice is $v=-\grad g(x_0)$
(the so-called {\emph{gradient descent}} choice). 

\item[(b)] ({\sl Newton's direction}).  This is the direction resulting from using
Newton's method to solve the problem $\grad g(x)=0$.  In other words, it is the direction
(assuming that $\grad g(x_0)\ne 0$ and that the Hessian is invertible) given by 
$v(x_0)=-H(x_0)^{-1}\grad g(x_0)$.  We note that this is a descent direction
for the functional $G$ at $x_0$, since $\grad G=H \grad g$.

\end{itemize}

\begin{rem}
Of course, we can always normalize a descent (and/or Newton's) direction to be
a vector of norm $1$.
\end{rem}

%\smallskip
\subsection{Descent direction within a positive definite region}
The following result, which is both fundamental and well known (see \cite[p.114]{DS}),
serves as motivation for some of our later algorithmic choices.

\begin{lemma}\label{NewtOK}
Let $x_0$ be a point where $\grad g\ne 0$ and let $H(x_0)$ be positive definite.
Then, the Newton's direction is a direction of descent for $g$.
\end{lemma}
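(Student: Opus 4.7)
The plan is to verify directly the descent-direction criterion stated just before the lemma, namely that a vector $v$ is a descent direction at $x_0$ iff $(\grad g(x_0))^T v < 0$. So I would set $v = -H(x_0)^{-1}\grad g(x_0)$ (well-defined since positive definiteness of $H(x_0)$ implies invertibility) and compute
$$
(\grad g(x_0))^T v \;=\; -\,(\grad g(x_0))^T H(x_0)^{-1} \grad g(x_0).
$$

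The next step is to observe that $H(x_0)^{-1}$ is also positive definite: this is immediate from the spectral decomposition, since the eigenvalues of $H(x_0)^{-1}$ are the reciprocals of those of $H(x_0)$, hence all strictly positive. Applying the definition of positive definiteness to the nonzero vector $\grad g(x_0)$ then gives
$$
(\grad g(x_0))^T H(x_0)^{-1} \grad g(x_0) \;>\; 0,
$$
so $(\grad g(x_0))^T v < 0$, which is the descent condition. Finally, a one-line Taylor expansion argument, $g(x_0 + \alpha v) = g(x_0) + \alpha (\grad g(x_0))^T v + O(\alpha^2)$, shows that for sufficiently small $\alpha > 0$ we indeed have $g(x_0 + \alpha v) < g(x_0)$, completing the verification.

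There is really no significant obstacle here: the result reduces to the elementary fact that $H^{-1}$ inherits positive definiteness from $H$, together with the definition of descent direction. The only minor subtlety worth pointing out in the write-up is that one needs $\grad g(x_0) \neq 0$ both to ensure $v \neq 0$ and to apply the strict positivity of the quadratic form $y \mapsto y^T H(x_0)^{-1} y$ to $y = \grad g(x_0)$.
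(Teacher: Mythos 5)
Your proof is correct and follows essentially the same route as the paper, which simply notes that $(\grad g)^T v<0$ for $v=-H^{-1}\grad g$ ``is obvious since $H$ is positive definite.'' You have merely filled in the details the paper leaves implicit (positive definiteness of $H^{-1}$ and the first-order Taylor argument), so there is nothing further to add.
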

\begin{proof}
We need to show that --at $x_0$-- we have
$(\grad g)^T v<0$ when $v=-H^{-1}\grad g$.  But this is obvious
since $H$ is positive definite.
\end{proof}
With the help of Lemma \ref{NewtOK}, the following is immediate.
\begin{proposition}\label{Prop1}
Suppose that $x_0\in \Omega_0$, where $\Omega_0$ is 
a path-connected component where $H$ is positive definite. 
Then, either $\grad g(x_0)=0$, or
there exists a direction $v\in \Rn$, and a scalar $\alpha>0$, such that
both $G(x_0+\alpha v)< G(x_0)$ and $g(x_0+\alpha v)< g(x_0)$.  Further,
one can also choose $\tau$ so that both potentials decrease and
$H(x_0+\tau v)$ is positive definite.  
\end{proposition}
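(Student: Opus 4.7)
The plan is to show that the Newton direction $v=-H(x_0)^{-1}\grad g(x_0)$, evaluated at $x_0$, simultaneously provides descent for both potentials $g$ and $G$, and that all relevant properties are preserved in a neighborhood of $x_0$ thanks to continuity.

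First I would dispose of the trivial case: if $\grad g(x_0)=0$, there is nothing to prove. Otherwise, since $x_0\in\Omega_0$ and $H(x_0)$ is positive definite, $H(x_0)$ is invertible, so the Newton direction $v=-H(x_0)^{-1}\grad g(x_0)$ is well-defined and nonzero. By Lemma \ref{NewtOK}, $v$ is a descent direction for $g$, meaning $(\grad g(x_0))^Tv<0$.

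Next, I would verify that the \emph{same} $v$ is a descent direction for $G$. Using the identity $\grad G = H\grad g$ recalled in item (b) of Section \ref{descents}, one computes
\begin{equation*}
(\grad G(x_0))^T v \;=\; (H(x_0)\grad g(x_0))^T\bigl(-H(x_0)^{-1}\grad g(x_0)\bigr) \;=\; -\|\grad g(x_0)\|^2 \;<\; 0,
\end{equation*}
where I used the symmetry of $H(x_0)$. Thus $v$ is simultaneously a descent direction for $g$ and for $G$ at $x_0$. By a first-order Taylor expansion of each potential along the ray $t\mapsto x_0+tv$, there exists $\alpha_1>0$ such that $g(x_0+\alpha v)<g(x_0)$ for all $\alpha\in(0,\alpha_1]$, and similarly there exists $\alpha_2>0$ such that $G(x_0+\alpha v)<G(x_0)$ for all $\alpha\in(0,\alpha_2]$. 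Taking $\alpha:=\min(\alpha_1,\alpha_2)$ yields the first assertion.

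For the final claim, I would invoke Lemma \ref{PropertiesHess}(1): the set where $H$ is positive definite is open, and $x_0$ lies in the component $\Omega_0$. Hence there exists $r>0$ such that $H(y)$ remains positive definite for every $y$ in the open ball $B_r(x_0)$; in particular, $H(x_0+tv)$ is positive definite for all $t\in[0,r/\|v\|)$. Choosing
\begin{equation*}
\tau \;:=\; \tfrac{1}{2}\min\!\bigl(\alpha_1,\ \alpha_2,\ r/\|v\|\bigr)
\end{equation*}
guarantees simultaneously that $g(x_0+\tau v)<g(x_0)$, that $G(x_0+\tau v)<G(x_0)$, and that $H(x_0+\tau v)$ is still positive definite, completing the proof. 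There is no substantive obstacle here: the only point requiring a moment's thought is noticing that the Newton direction happens to be a common descent direction for both $g$ and $G$, after which the conclusion follows from Taylor expansion plus the openness of $\Omega_0$.
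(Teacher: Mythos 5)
Your proposal is correct and follows essentially the same route as the paper: the paper also uses the Newton direction as the common descent direction for $g$ and $G$ (it phrases the two conditions $v^T\grad g<0$ and $v^TH\grad g<0$ as open half-spaces both containing the Newton direction, which is exactly what your explicit computation verifies), and it likewise obtains $\tau$ from the openness of the positive-definite region. Your version is just slightly more explicit in evaluating $(\grad G)^Tv=-\|\grad g\|^2$ and in tracking the step-length bounds.
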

\begin{proof}
We want $v$ such that both of these relations hold at $x_0$:
$$v^T \grad G <0 \quad \text{and} \quad v^T \grad g <0\,.$$
Since $\grad G=H \grad g$, the sought relations rewrite as
$$v^T H \grad g <0 \quad \text{and} \quad v^T \grad g <0\,.$$
Each of the above inequalities defines an open half space, and the
Newton's direction is in both of these.  Therefore, the existence of a unit
vector $v$ giving us the sought decrease is established,
and there exists a scalar $\alpha$, positive, such that both
$G(x_0+\alpha v)< G(x_0)$ and $g(x_0+\alpha v)< g(x_0)$. 
\\
Further, since $H(x_0)$ is positive definite, then there is an open ball
centered at $x_0$ and of radius $r>0$, $B_r(x_0)$,
such that $H(x)$ remains positive definite for any $x \in B_r(x_0)$.  
Therefore, there exist $\tau$ so that $H(x)$ is positive definite
for $x=x_0+\tau v$.
\end{proof}

\begin{rem}\label{Rem1}
Because of Lemma \ref{NewtOK}, in Proposition \ref{Prop1} we can choose $v$
to be the Newton's direction.  
\end{rem}

We note right away that
it is often not advisable to select the step-length $\tau$ so that the Hessian remains
positive definite.  Indeed, in our numerical experiments, doing so often resulted
in a severe restriction of the step-length and inefficient computations, and it was
quite preferable to allow a decrease in the potentials without forcing a fixed inertia
for the Hessian.  For this reason, we now define the double-descent direction
allowing for the Hessian to be indefinite.

\subsection{Descent direction within an indefinite region}

Here, we generalize the above result to the case of regions
with different Hessian's inertia.

\begin{proposition}\label{main}
Let $\Omega$ be a %bounded 
path-connected region where $\nu(H)=\{n_+,n_0,n_-\}$ for
all $x\in \Omega$, with $n_+\ge 1$. Let $x_0\in \Omega$, and let
$V_+=\span\{v_1,\dots, v_{n_+}\}$ be the subspace spanned by the first $n_+$
eigenvectors of $H(x_0)$.
\\
Then, if $V_+^T\grad g(x_0)\ne 0$, 
there exists a direction $v\in \Rn$, and a scalar $\alpha>0$, such that
both $G(x_0+\alpha v)< G(x_0)$ and $g(x_0+\alpha v)< g(x_0)$.
Further, if $n_0=0$, i.e. $H(x_0)$ is hyperbolic,
then there exists $\tau>0$ so that
$\nu(H(x_0+\tau v))=\nu(H(x_0))$.
\\
Finally, in all cases above, the direction $v$ can be taken as 
$v=-(H_{+}(x_0))^{\dagger}\grad g(x_0)$,
where $H_+$ is the closest positive semidefinite matrix to $H$; that is,
if $H=V\Lambda V^T$, and $\Lambda=\diag(\Lambda_+, \Lambda_{0}, \Lambda_-)$, then
$H_+=V\Lambda^+ V^T$, with $\Lambda^+=\diag(\Lambda_+, 0_{n_0} , 0_{n_-})$, and thus
$(H_+)^\dagger=V_+(\Lambda_+)^{-1} V_+^T$.
\end{proposition}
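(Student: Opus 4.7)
The plan is to follow the same template as Proposition \ref{Prop1}, reducing everything to checking that the proposed direction $v = -(H_+(x_0))^{\dagger}\,\nabla g(x_0)$ is a simultaneous descent direction for $g$ and for $G$ at $x_0$. Writing the spectral decomposition $H(x_0) = V\Lambda V^T$ with $V = [V_+,V_0,V_-]$, one has $(H_+)^{\dagger} = V_+\Lambda_+^{-1}V_+^T$, a positive semidefinite matrix whose range is exactly the dominant subspace spanned by $V_+$, and whose non-zero spectrum is $\Lambda_+^{-1}$.

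First I would compute the directional derivative of $g$ along $v$. Substituting the formula for $v$ gives
\[
v^T \nabla g(x_0) \;=\; -\nabla g(x_0)^T V_+\Lambda_+^{-1}V_+^T \nabla g(x_0) \;=\; -w^T \Lambda_+^{-1} w,
\]
where $w := V_+^T \nabla g(x_0)$. By hypothesis $w \ne 0$, and since $\Lambda_+$ is diagonal with strictly positive entries, the quadratic form is strictly positive, so $v^T\nabla g(x_0)<0$ and $v$ is a descent direction for $g$.

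Next comes the directional derivative of $G$, which is $v^T \nabla G(x_0) = v^T H(x_0)\nabla g(x_0)$ since $\nabla G = H\nabla g$. Here the useful algebraic identity, exploiting orthonormality of the columns of $V$, is $V_+^T V\Lambda V^T = \Lambda_+ V_+^T$, so that $(H_+)^{\dagger} H = V_+ V_+^T$, i.e.\ the orthogonal projector onto the range of $V_+$. Therefore
\[
v^T \nabla G(x_0) \;=\; -\nabla g(x_0)^T V_+ V_+^T \nabla g(x_0) \;=\; -\|w\|^2 \;<\; 0 .
\]
This projector identity is the one nontrivial piece of book-keeping; I expect it to be the main obstacle, but everything else reduces to Taylor expansion or continuity of eigenvalues.

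With both $v^T \nabla g(x_0) < 0$ and $v^T \nabla G(x_0) < 0$ in hand, first-order Taylor expansions of $g$ and $G$ furnish positive numbers $\alpha_1, \alpha_2$ such that the respective potential decreases strictly along $v$ on $(0,\alpha_i]$; taking any positive $\alpha$ below $\min(\alpha_1,\alpha_2)$ settles the decrease claim. For the inertia statement when $n_0 = 0$, I would invoke the Remark following Lemma \ref{PropertiesHess}: the set of points at which $H$ has inertia $\{n_+,0,n_-\}$ is open, because $H$ depends smoothly on $x$, its eigenvalues depend continuously on $H$, and none of them vanishes at $x_0$. Hence there is an open ball around $x_0$ on which $\nu(H)$ is constant, and any $\tau>0$ small enough that $x_0+\tau v$ lies in that ball yields $\nu(H(x_0+\tau v)) = \nu(H(x_0))$, completing the proof.
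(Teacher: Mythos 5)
Your proposal is correct and follows essentially the same route as the paper: verify that $v=-(H_+(x_0))^{\dagger}\nabla g(x_0)$ satisfies $v^T\nabla g<0$ and $v^TH\nabla g<0$ via the spectral decomposition, then get $\alpha$ from first-order Taylor expansion and $\tau$ from local constancy of the inertia in the hyperbolic case. In fact your bookkeeping is slightly cleaner than the paper's: the identity $(H_+)^{\dagger}H=V_+V_+^T$ gives $v^TH\nabla g=-\|V_+^T\nabla g\|^2$, whereas the paper's displayed formula attaches the weight $\Lambda_+^{-1}$ to this quadratic form (which really belongs to $v^T\nabla g$); both quantities are strictly negative under the hypothesis, so the conclusion is unaffected.
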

\begin{proof}
We want $v$ such that both of these relations hold at $x_0$:
$$v^T H \grad g <0 \quad \text{and} \quad v^T \grad g <0\,.$$
Considering the direction $v$ given in the statement, we have
$$v^T\grad g=-\grad g(x_0)^T(H_+(x_0))^{\dagger}\grad g(x_0)<0$$
since $V_+^T\grad g(x_1)\ne 0$.  
For the same reason, we also have
\begin{eqnarray*}
v^T H \grad g & =-\grad g(x_0)^T(H_+(x_0))^{\dagger} H \grad g(x_0) =  \\
& = -(V_+^T\grad g)^T(\Lambda_+)^{-1}(V_+^T\grad g)<0\,.
\end{eqnarray*}
\\
Therefore, the existence of a unit
vector $v$ giving us the sought decrease is established, and
there exists a scalar $\alpha$, positive, such that both
$G(x_0+\alpha v)< G(x_0)$ and $g(x_0+\alpha v)< g(x_0)$. 
\\
Further, if $H(x_0)$ is hyperbolic, then
$\nu(H(x_0))=\{n_+,n_-\}$.  Therefore, 
there is an open ball centered at $x_0$ and of radius $r>0$, $B_r(x_0)$,
such that $\nu(H(x))=\nu(H(x_0))$ for any $x \in B_r(x_0)$.  Thus, we can
choose $\tau>0$ such that $\nu(H(x_0+\tau v))=\nu(H(x_0))$.
\end{proof}

\begin{rem}
The direction $v=-(H_+(x_0))^{\dagger}\grad g(x_0)$
of Theorem \ref{main} is effectively the Newton's direction
restricted to the subspace associated to the positive eigenvalues.
\end{rem}

\begin{summary}\label{SummMin}
To sum up, as long as the point $x_0$ is in a region where the Hessian has at least one
positive eigenvalue, and $\grad g$ has a non-trivial component in the subspace
spanned by the eigenvectors corresponding to the positive eigenvalues, we can always find a
direction $v$ which is of descent for both $G$ and $g$. If $H(x_0)$ has no $0$ eigenvalue,
we can also maintain
the inertia of $H$ by taking a step in the direction $v$; however, this may be counterproductive
(as our computations showed), since it may unduly restrict the step $\tau$, and 
it is much more desirable
to let the iterate enter and exit regions of different Hessian's inertia while
decreasing the potentials $g$ and $G$.
\end{summary}
\begin{rem}
One more comment is needed about the condition  $V_+^T(x)\grad g(x)\ne 0$. \\
The dimension of the subspace $\span(V_+)$ is $n_+$, while $\grad g$ is a
vector in $\Rn$.  Therefore, in general, the requirement  $V_+^T\grad g= 0$
would define a set of $n_+$ equations in the $n$ variables $x\in \Rn$.  Generally,
these define an $n-n_+$ dimensional manifold immersed in $\Rn$.
Therefore, we should expect that, at any $x$,
the vector $\grad g$ will have a nontrivial component in $V_+$.
This is the truer the larger is $n_+$.  
%For example, when $n_+=n-1$, then
%$V_+^T\grad g= 0$ gives a (union of) curve(s) in $\Rn$, which is (are) very unlikely
%to lie on the complement of $\R^{n-1}$; and, 
In the limiting case of $n_+=n$,
Lemma \ref{NewtOK} already told us that.  At the same time,
if $n_+=0$, then obviously there is no direction $v_+$ to begin with.
In this case, there is no double descent direction to begin with, and
our method (see below) will revert to using the gradient direction.
% $V_+^T\grad g= 0$ and will
%not be able to find a direction of descent for both $g$ and $G$. 
\end{rem}
%The above notwithstanding, our main purpose in this work has been to give ways to
%explore the phase space (the landscape), with limited concern about efficient
%implementation.

\section{\sffamily{Double-descent colored-diffusion method}}\label{DD-CID}
The main idea of our technique is to take advantage of the knowledge of the
Hessian inertia, in order to explore the landscape going from a saddle point to 
a minimum and vice versa, avoiding being trapped in the basins of attraction 
of the critical points while following an educated path.

There are two
types of processes we use: ``local zoom-in,'' and ``basin escaping'' methods.\\
\subsection{Reaching  a critical point: local search}
This part is based on the developments of Section \ref{descents}.
We distinguish between the cases of searching for a minimum or a saddle.  In the
former case, we have a host of possibilities: 
the double descent algorithm, gradient-descent, Newton's method (damped);
in the latter, we use a (damped)
Newton's method.  Still, we must make some careful implementation choices.
\\
For example, when searching for a minimum, beside the usual concerns on how
to choose the step-length (see \cite{DS}), we also accounted for these aspects
when implementing the double descent method.
\begin{itemize}
\item[(i)]
When using the double-descent direction, we demand that
both $g$ and $G$ have appreciably decreased.%; this is
%motivated by the fact that we eventually want/expect to take just Newton's
%steps.
\item[(ii)]
To declare that $\grad g$ has no meaningful component in the
subspace spanned by $V_+$, we used the criterion
$\|V_+^T(x_k)\grad g(x_k)/\|\grad g(x_k)\| \|\le \sqrt{n}/10$.
When this happens, we reverted to the
direction $v$ given by the simpler gradient-descent direction, and
kept this descent strategy for 5 steps before retrying the double descent
direction.  Likewise, we reverted to the gradient descent direction
when too many damping steps are taken with the double descent
direction.
%[Using the full Newton direction would of course be another option,
%but in this case we may end up at a saddle rather than
%a minimum].
\item[(iii)]
Another important consideration is about the stopping criterion
(both when searching for a minimum or a saddle). % Standard
%choices (e.g., see \cite{Kelley}) suggest to iterate until (within a maximum
%allowed number of iterations, and for a given error tolerance {\tt tol}) a criterion
%like $\|\grad g(x_k)\|>(1+\|\grad g(x_0)\|) {\mathtt{ tol}} $ is satisfied, where
%$x_0$ is the initial point.   %This is justified if the initial
%%point $x_0$ is close enough to the solution.
%However, if $x_0$ is not close to a minimum,
%we often ended up declaring convergence too soon, simply
%because $\grad g(x_0)$ was large; thus, 
In our implementation we chose
the following stopping criterion (always within the maximum allowed number
of iterations).  We iterate as long as:\\
\centerline{$\|\grad g(x_k)\|\ge {\mathtt{atol}}+ \|\grad g(x_0)\| {\mathtt{rtol}} \,\
\text{and} \,\ \| x_k-x_{k-1} \|\ge {\mathtt{atol}}+ \| x_0\| {\mathtt{rtol}}\ .$}
\end{itemize}

\subsection{Basin escaping by Color Diffusion}\label{BECD}
Here we adopt (some of) the ideas in Section \ref{IntDiff} in order to
leave the basin of attraction (for Newton's method) of a critical point.
In particular, compare 
\eqref{CIDmin-to-sad}, \eqref{CIDsad-to-min-1}, and \eqref{CIDsad-to-min-2},
with \eqref{IDsdeDiscrete}; naturally, just as \eqref{IDsdeDiscrete} can be viewed as
a discretization of the SDE \eqref{IDsde}, also our equations \eqref{CIDmin-to-sad}, 
\eqref{CIDsad-to-min-1}, and \eqref{CIDsad-to-min-2}, can be thought as
discretizations of an underlying SDE, in regions where the
Hessian inertia and the dominant (respectively, smallest) eigenvalue
are not changing.

As seen in Section \ref{descents}, the double-descent method 
is designed to lead to a minimum when the Hessian at the starting point
is either positive definite or indefinite, and -when properly implemented-
it will really be Newton's method close to a minimum. 
Instead, when the initial value lies in a 
region in which both $n_{+}(H)$ and $n_{-}(H)$ are nonzero, we will
presume that (damped) 
Newton method will converge to a saddle point (or perhaps to a maximum);
this expectation has
effectively been borne out in practice for the vast majority of
our experiments. 

Regardless, if we are at either a minimum or at a saddle, we need to leave
the respective basins of attraction for (damped) Newton method.  To do this,
we implemented a colored intermittent
diffusion method as follows, by reflecting the choices made above to look
for either a saddle or a minimum, and the discussion in Section \ref{IntDiff}.

\begin{itemize}
\item[(a)] {\emph{From a min $x_0$, trying to go to a saddle}}.  
Let us first assume that, $\lambda_1>\lambda_2\ge \cdots \ge \lambda_n$ along
our iterates.
There are three basic steps.
\begin{itemize}
\item[(i)]
Select $\alpha$ (usually, $\alpha=1$) and generate
$$x_1=x_0+\alpha v_1(x_0)v_1^T(x_0)W\ ,\,\ W\in N(0,1)^n\ .$$
\item[(ii)]
Find $h$ such that $|G(\hat x_{k+1})|<|G(x_k)|$, with $\hat x_{k+1}\  = \ 
x_k-h\bigl(H^\dagger(x_k)\grad g(x_k)\bigr)$, and then
\begin{equation}\label{CIDmin-to-sad}\begin{split}
x_{k+1}\ & = \ x_k-h\bigl(H^\dagger(x_k)\grad g(x_k)\bigr)+\alpha \sqrt{h} \sigma(x_k)W \ ,
\text{where}\\
\sigma(x_k) \ & = \ -v_1(x_k)v_1(x_k)^T\ ,\quad \text{and} \quad W\in N(0,1)^n\ .
\end{split}
\end{equation}
\item[(iii)]
Continue with the diffused damped Newton's above until the
Hessian has some negative eigenvalues or the maximum 
number of diffusive iteration has been exceeded.  At that point, use
(damped) Newton's method.  Hence, select the Newton direction $v$
and the step length $h$ to decrease the auxiliary potential $G$; say,
$G(x_k+hv)<G(x_k)$.  If $g_m$ denotes the value of the potential $g$ at
the minimum from which we started, we observed that consistently
$g(x_k+hv)>g_m$ which betrays that we are not going back to the starting
minimum.
\end{itemize}
The rationale for the colored noise diffusive step is to move away as quickly as possible
from the basin of attraction of the minimum.  If $x_0$ is a minimum, the standard
quadratic approximation in a $\epsilon$-ball around $x_0$ will give:
$$g(x_0+\epsilon y)=g(x_0)+\epsilon \grad g(x_0)^Ty + \frac12 \epsilon^2 y^TH(x_0)y+ \dots $$
and therefore, with $\|y\|=1$, the fastest increase is for $y=v_1$.
In the (very unlikely) case that
the dominant eigenvalue has multiplicity greater than $1$, we select a random
vector in the span of the dominant eigenvectors.

\item[(b)] {\emph{From a saddle $x_0$, trying to go to a min}}.
Let us first assume that $\lambda_1\ge \cdots \ge \lambda_{n-1}> \lambda_n$.
Even here there are two basic steps, getting out of the saddle and going
to a minimum.  The second step, see below,
can be carried out with the double descent method, or with gradient descent,
or possibly with a (damped) Newton
approach.  In all cases, first we use colored diffusion 
steps to move out of the saddle.
\begin{itemize}
\item[(i)]
Select $\alpha$ (usually, $\alpha=1$) and generate
$$x_1=x_0+\alpha v_n(x_0)v_n^T(x_0)W\ ,\,\ W\in N(0,1)^n\ .$$
As before, the choice of the colored noise diffusive step is to move away as quickly as possible
from the basin of attraction of the saddle, while decreasing the potential $g$.
If $x_0$ is a saddle, in a $\epsilon$-ball around $x_0$ we have:
$$g(x_0+\epsilon y)=g(x_0)+\epsilon \grad g(x_0)^Ty +\frac12 \epsilon^2 y^TH(x_0)y+ \dots $$
and therefore, with $\|y\|=1$, the fastest decrease is for $y=v_n$.
\item[(ii)]
%Choose the local method, with or without diffusion (the term in square
%brackets below).\\
{\bf Double descent}.
If $\grad g(x_k)$ has a meaningful
component in the direction of $V_+(x_k)$, do (i-a), otherwise do (i-b).
\begin{itemize}
\item[(ii-a)]
Find $h$ such that both $|G(\hat x_{k+1})|<|G(x_k)|$, and 
$|g(\hat x_{k+1})|<|g(x_k)|$, with $\hat x_{k+1}\ = \ x_k-h\bigl(H_+^\dagger(x_k)\grad g(x_k)\bigr)$, and then
\begin{equation}\label{CIDsad-to-min-1}\begin{split}
x_{k+1}\ & = \ x_k-h\bigl(H_+^\dagger(x_k)\grad g(x_k)\bigr)+\alpha \sqrt{h} \sigma(x_k)W\
\ ,\text{where}\\
\sigma(x_k) \ & = \ -v_{n}(x_k)v_{n}(x_k)^T\ ,\quad \text{and} \quad W\in N(0,1)^n\ .
\end{split}
\end{equation}
\item[(ii-b)] Find $h$ such that $|g(\hat x_{k+1})|<|g(x_k)|$, with 
$ \hat x_{k+1}\ = \ x_k-h\ \grad g(x_k)$,  and then
\begin{equation}\label{CIDsad-to-min-2}\begin{split}
x_{k+1}\ &= \ x_k-h\ \grad g(x_k) +\alpha \sqrt{h} \sigma(x_k)W \ ,\text{where}\\
\sigma(x_k) \ & = \ -v_{n}(x_k)v_{n}(x_k)^T\ ,\quad \text{and} \quad W\in N(0,1)^n\ .
\end{split}
\end{equation}
\end{itemize}
\item[(iii)]
Continue with the diffused Double Descent above until the
Hessian has all positive eigenvalues or the maximum 
number of diffusive iteration has been exceeded.
At that point, use
Double Descent method.  Hence, select the direction $v$
and the step length $h$ to decrease the potential $g$ auxiliary potential $G$; say,
$g(x_k+hv)<g(x_k)$ and $G(x_k+hv)<G(x_k)$.
%{\bf Newton}.
%Find $h$ such that $G(x_{k+1})<G(x_k)$, and then choose
%\begin{equation}\label{CIDsad-to-min-N}\begin{split}
%x_{k+1}\ & = \ x_k-h\bigl(H^\dagger(x_k)\grad g(x_k)\bigr)+[\alpha \sqrt{h} \sigma(x_k)W]
%\ ,\text{where}\\
%\sigma(x_k) \ & = \ -v_{n}(x_k)v_{n}(x_k)^T\ ,\quad \text{and} \quad W\in N(0,1)^n\ .
%\end{split}\end{equation}
%Continue with the diffused colored noise method until the
%Hessian has only positive eigenvalues, or too many diffusion steps have been taken.
%At that point, turn the diffusion off and use only the local method (damped Newton),
%or restart from a different critical point.\\
\end{itemize}
%{\bf TO DO} Clarify when we stop the diffusion .... LD: I am now convinced that a certain
%number of steps with the diffusion are needed in order to diversify the exploration .... {\bf END DO}
Again, in the (very unlikely) event that the smallest eigenvalue has multiplicity greater than $1$,
we select a random unit vector in the corresponding subspace.
\end{itemize}

\section{\sffamily{The method at a glance}}\label{glance}
%.... {\bf TO DO} Work more on this section.  Need to make all the required experiments:
%how many steps with/without diffusion to escape basins is the most important thing.
%{\bf END DO} .....

In the previous sections, we presented only the two key components of the method, 
namely, the local search and the basin escaping.
Here, we give a broader idea of the method.

\begin{itemize}
\item[(0)]
The very first initial datum $x_0$ is randomly chosen (within a region of interest).
A local search for a minimum starts with the double descent method, and the point
found is stored in a Table of critical points.
\item[(1)]
A point from the Table is randomly selected.  Colored diffusion to escape
the basin of this critical point is performed (see Figure \ref{fig:1}), followed by a local search
for the next critical point.  The new point is stored in the Table\footnote{The
same point can thus appear multiple times in the Table.}, and
step (1) is thus repeated until a predefined number of critical points
is found.
\end{itemize}
\begin{figure}[hbt]%\label{goal}
\begin{subfigure}{0.45\textwidth}
\includegraphics[scale=0.36]{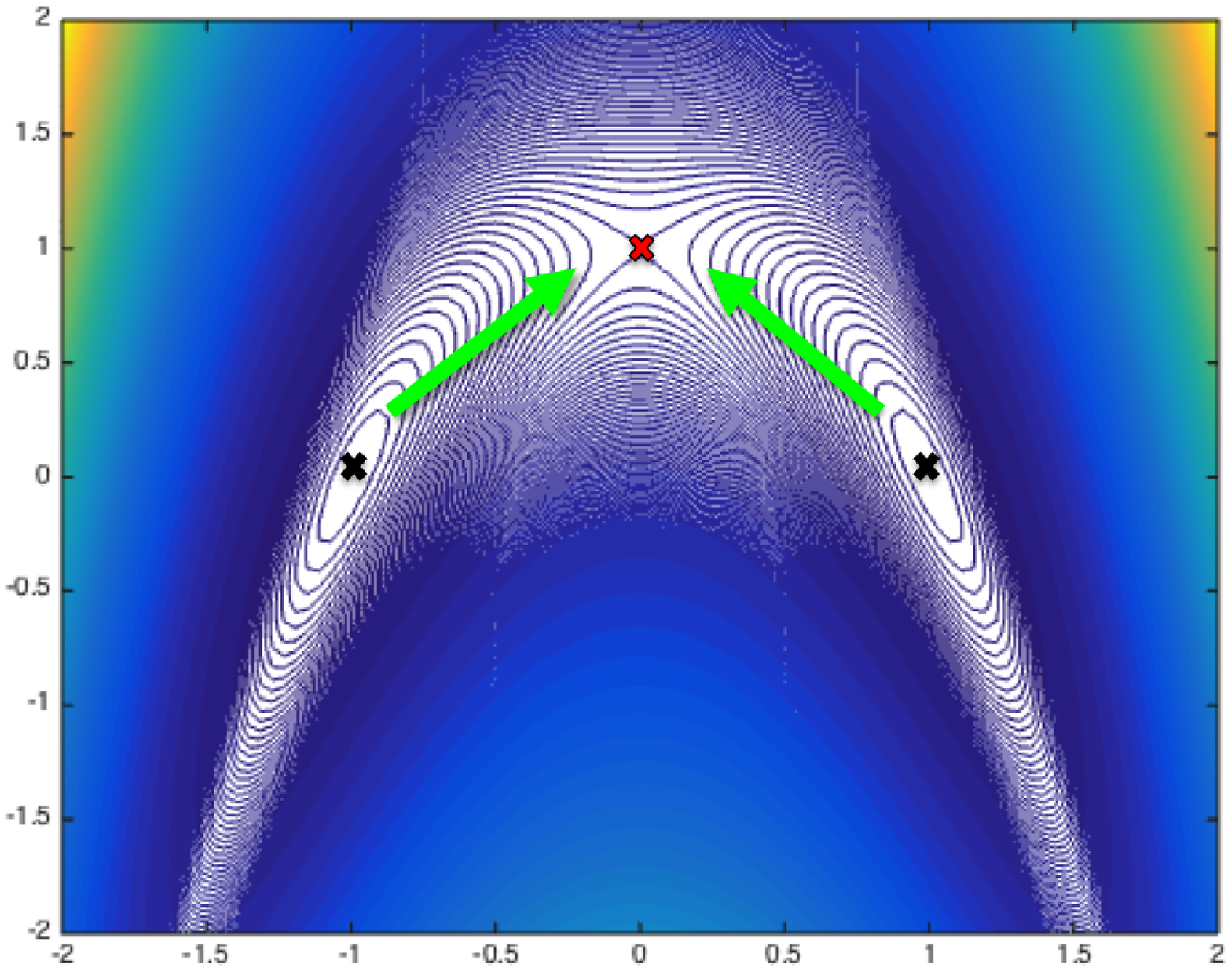}
%\vspace{-1cm}
\vspace{-3cm}
\caption{} \label{fig:1a}
\end{subfigure}
%\hspace*{\fill} % separation between the subfigures
\begin{subfigure}{0.45\textwidth}
\includegraphics[scale=0.36]{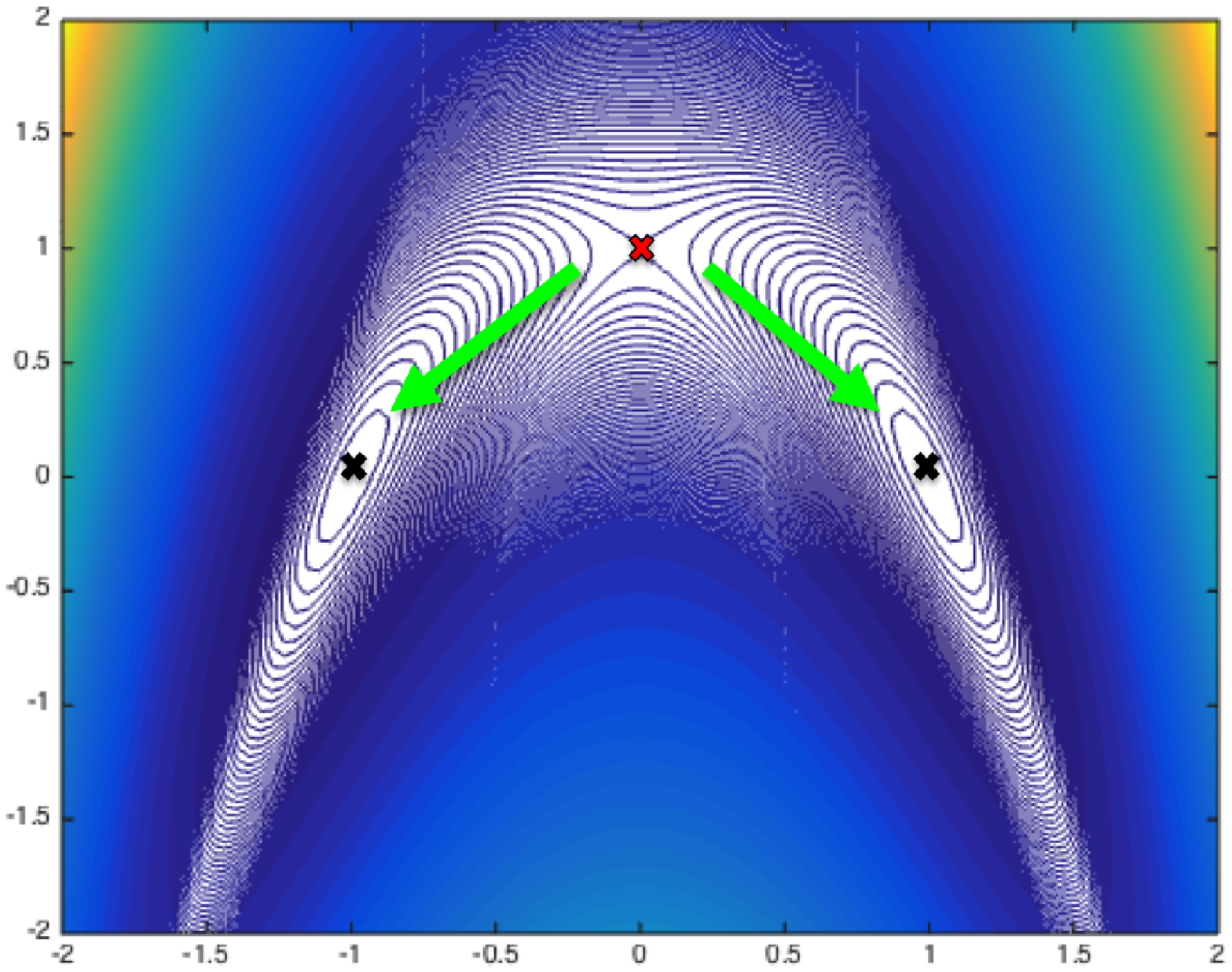}
%\vspace{-1cm}
\vspace{-3cm}
\caption{} \label{fig:1b}
\end{subfigure}
\vspace{-0.5cm}
\caption{\small{Basin escaping main idea. Consider the function of Example \ref{Ex:Molei}, 
which has two minima and a saddle point. To escape the basin of attraction of a minimum, 
the idea is to move towards a saddle point, as in Figure \ref{fig:1a}. Starting from a 
saddle point, instead,  the goal is to find a direction to move as quick as possible to a
 minimum, as illustrated in Figure \ref{fig:1b}.}}\label{fig:1}
%\includegraphics[scale=0.36]{Min2sad.png}\includegraphics[scale=0.36]{Sad2min.png}
%\vspace{-0.5cm}
%\caption{\small{.} }
\end{figure}
%%%%%%%%%%%%%%%%%%%%%%%%%%%%%%%%%%%%%%%%%%%%%%%%%%%%%%%
%%%%%%%%%%%%%%%%%%%%%%%%%%%%%%%%%%%%%%%%%%%%%%%%%%%%%%%
%%%%%%%%%%%%%%%%%%%%%%%%%%%%%%%%%%%%%%%%%%%%%%%%%%%%%%%
\subsection{Sketch of the algorithm}
\begin{enumerate}
\item Choose a random point $x_0$ in the search region.
\item Look for a minimum $x^{min}$ by using the double-descent method. % to $x_0$.
\item Store $x^{min}$ in the list of critical points.
\item LOOP BEGINS - to be repeated for a preassigned number of iterations.
\begin{enumerate}
\item Randomly choose a point $x$ from the list.
\item If $x$ is a minimum, start diffusion according to \eqref{CIDmin-to-sad} until 
$n_-(H)\neq 0$ (see Figure \ref{fig:2a}) or maximim number of diffusive steps is exceeded.
%{\bf TO DO} Fix these according to the experiments {\bf END DO} .... \\
\\
Apply (damped) Newton method to find a saddle point. \\
Store the saddle in the list of critical points.
\item If $x$ is a saddle point, take diffusive steps according to \eqref{CIDsad-to-min-1} 
(or \eqref{CIDsad-to-min-2}), until $n_-(H)=0$ (see Figure \ref{fig:2b}) or the maximum number 
of diffusive steps is exceeded.
%.... {\bf TO DO} Fix these according to the experiments {\bf END DO} .... \\
Apply double descent to find a minimizer.\\
Store the minimum in the list of critical points.
\end{enumerate}
LOOP ENDS
\end{enumerate}
\begin{figure}[hbt]%\label{goal}
\begin{subfigure}{0.45\textwidth}
\includegraphics[scale=0.35]{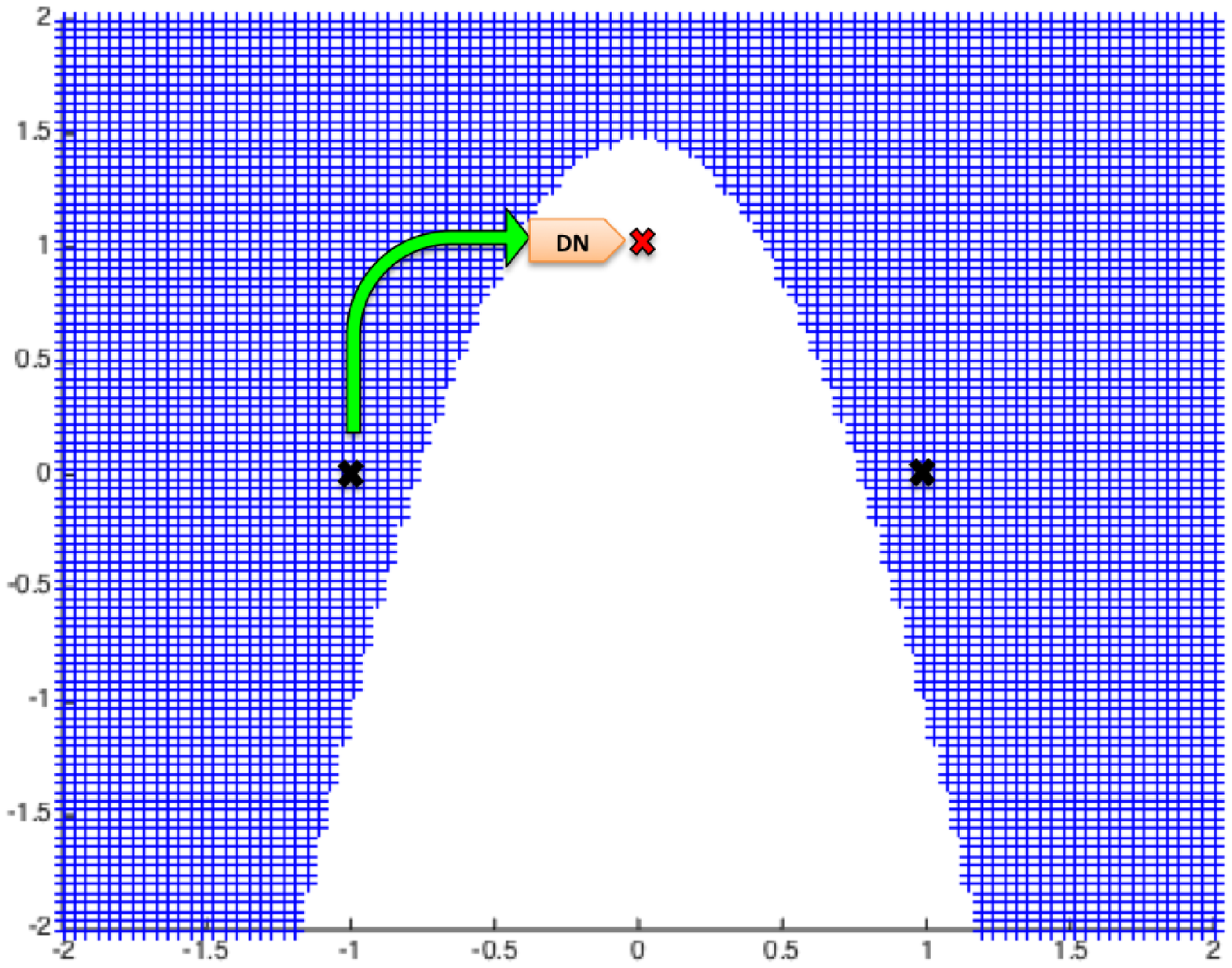}
%\vspace{-1cm}
\vspace{-3cm}
\caption{} \label{fig:2a}
\end{subfigure}
%\hspace*{\fill} % separation between the subfigures
\begin{subfigure}{0.45\textwidth}
\includegraphics[scale=0.35]{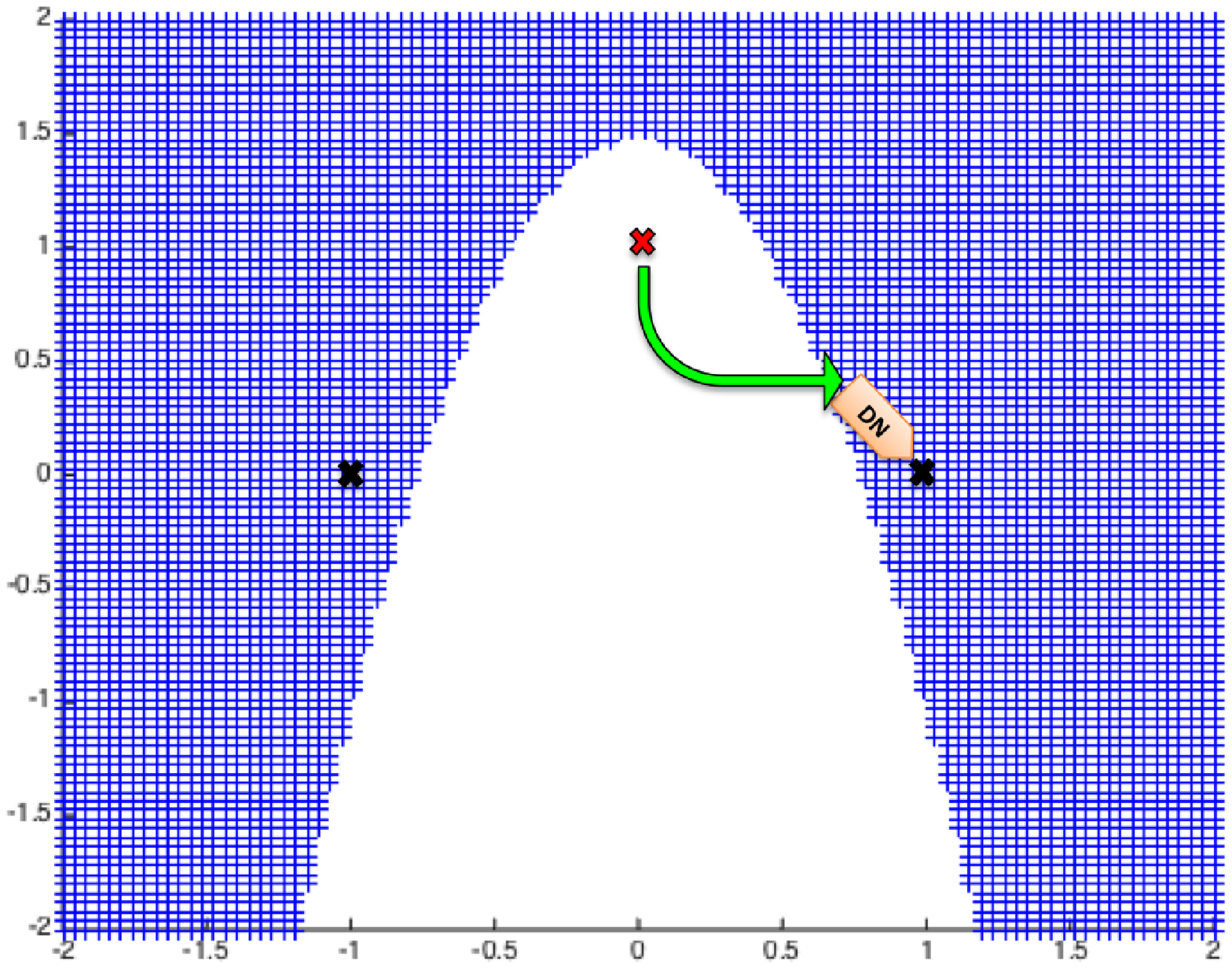}
%\vspace{-1cm}
\vspace{-3cm}
\caption{} \label{fig:2b}
\end{subfigure}
\caption{\small{Switch timing for colored diffusion. The blue region represents 
the set of points at which the Hessian is positive definite, while the white region 
has an indefinite Hessian. Suppose we need to escape the basin of attraction of a 
minimum (Figure \ref{fig:2a}), the diffusion process is triggered until reaching the 
white region, where a local search starts. Conversely, starting from a saddle point, 
as in Figure \ref{fig:2b}, the diffusion process stops when the blue region is reached, 
and the local search begins.}}\label{fig:2}
%\includegraphics[scale=0.36]{Min2sad.png}\includegraphics[scale=0.36]{Sad2min.png}
%\vspace{-0.5cm}
%\caption{\small{.} }
\end{figure}

\subsection{Computational considerations}
A main drawback of Newton's type technique, hence also of the
double descent method, is the need to form, evaluate, and decompose, the Hessian.
Except for problems where the Hessian is simple to evaluate, and very structured
(e.g., tridiagonal), this can be very expensive and it restricts applicability
of Hessian based techniques to small dimension (say, up to a few hundred
variables on a typical laptop).  We also note that for some problems 
evaluating the Hessian is itself an expensive task; e.g., this is the case for
interatomic potentials, such as the Morse and Lennard-Jones potentials (see Section
\ref{Expers}).  Although our purpose in this work has not been to deal specifically with
efficient implementations, but rather to give ways to
explore the phase space (the landscape), for large (possibly sparse)
problems we have experimented with Lanczos techniques, 
and a subspace version of Newton's method,
whereby we project the Hessian in the direction of the most dominant eigenvalues
(positive and negative).  We will report on these apects in
other works.

An important consideration pertains to the colored noise diffusion.
To perform this diffusion, and to monitor when to stop it, it
is straightforward to bypass the Hessian factorization.
In fact, to form the color noise, and to decide when to stop diffusion,
we only need the two eigen-directions $v_1$ and $v_n$.
These are inexpensive to obtain with a well designed Lanczos technique (e.g.,
{\tt eigs} in {\tt Matlab}), by asking for
(respectively) the largest and smallest eigenvalues.  
This feature is particularly
useful when using just a damped Newton's method with color noise (as in our basic
intermittent diffusion method from a min to a saddle),
since the linear systems arising during the
iteration are then solved without resorting to a full eigen-decomposition.
To elucidate, and to account for the possibility of singular Hessian, we
first form the QR factorization with column pivoting of the Hessian:
$HP=QR$ with diagonal of $R$ ordered in decreasing magnitude. Then solve the
resulting triangular system, possibly for the minimum norm solution (if the
Hessian was singular, which is betrayed by $R_{nn}=0$).

Finally, as seen in Section \ref{BECD}, we use a variable stepsize controlled through
the requirement of moving in the descent direction(s). The initial stepsize
is set to $1$, and the stepsize is always required to remain in $[2^{-26},2^5]$,
where $2^{-26}= \sqrt{\tt{eps}}$, the square root of the machine precision. 
When one step is taken in the desired
direction, and the computation is immediately accepted, then the stepsize is doubled;
if the computation is rejected, the stesize is halved. 
If we reach the minimum allowed stepsize, the algorithm halts and restarts from
a different critical point in the Table (or a different random point, the very
first time).

\section{Applications and examples}\label{Expers}
In this section, we show performance of our method on several problems,
% two different applications of the method. 
%In fact, the algorithm, primarily designed for global optimization, 
%can also be applied to every problem whose solutions require 
%finding critical points of a real-valued function $g$.\\
%As for global optimization, in the following we report the results 
both standard model problems, with an illustrative purpose and 
to validate the method on different landscape features, and those arising from
chemical potentials. 

\subsection{Test Problems}
 \subsubsection{An illustrative example}\label{Ex:Molei}
 Consider the following elementary potential:
  \begin{equation}\label{Molei} g(x,y)= (x^2-1)^2+(x^2+y-1)^2.
  \end{equation}
  It has 2 minimizers, located at $(-1,0)$ and $(1,0)$, and a saddle point at $(0,1)$.
Starting from a random point $x_0$, 
the Double Descent technique quickly leads to a minimizer, 
and the diffusion combined with Newton method allows to find the saddle point,
from which the algorithm looks again for a minimizer.
By using our technique, and asking the algorithm for at most 4 critical points,
we were able to find, in a single run, the two minimizers and the saddle point. 
Indeed, the method is behaving exactly like we were hoping:
first, it converges to $(-1,0)$, then it goes through the saddle point and 
from there localizes the other minimum at $(1,0)$, and then it goes back to the saddle point.
On average we counted 3 diffusive steps and $8.25$ local search iterations. 

 % {\bf Add figure}
%  Now we have two possibilities: either our starting point is the minimizer again, 
%  in which case we impose the diffusive step to move on the same direction but with 
%  opposite sign, or the starting point is the saddle, in which case we take a diffusive 
%  step and hopefully converge to the other minimizer by means of the double descent technique.
  
\subsubsection{Shubert Function}
The Shubert function is a highly multimodal potential: it has several local minima and 
many global ones. Naturally, the function presents many saddle points and maxima. 
Moreover, the global minima and the global maxima are extremely close, and this is one of 
the reason why it may be difficult to find the global minimizers.\\
\begin{figure}[hbt]%\label{goal}
\begin{subfigure}{0.45\textwidth}
\includegraphics[scale=0.39]{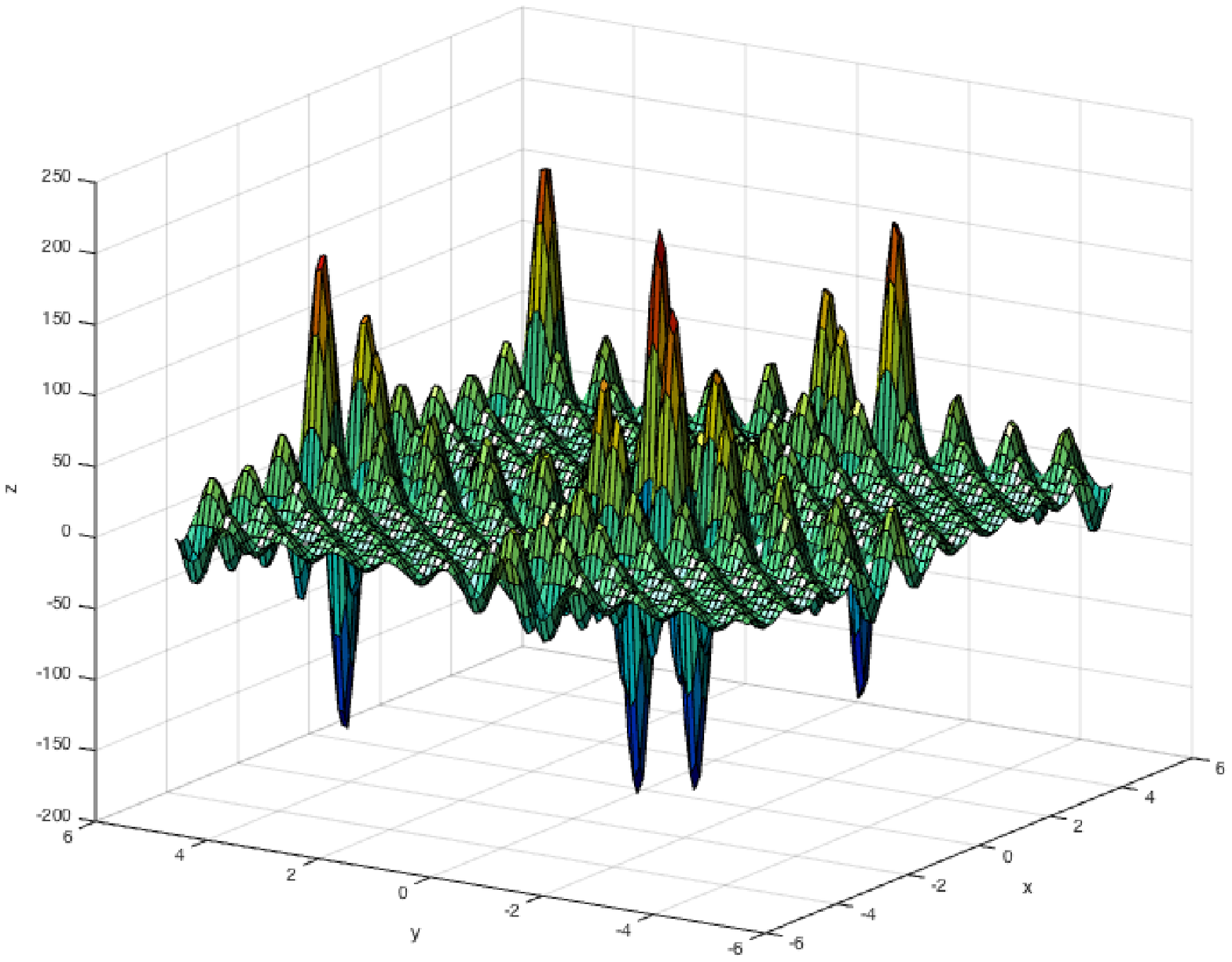}
\vspace{-1cm}
\caption{} \label{fig:3a}
\end{subfigure}
%\hspace*{\fill} % separation between the subfigures
\begin{subfigure}{0.45\textwidth}
\includegraphics[scale=0.4]{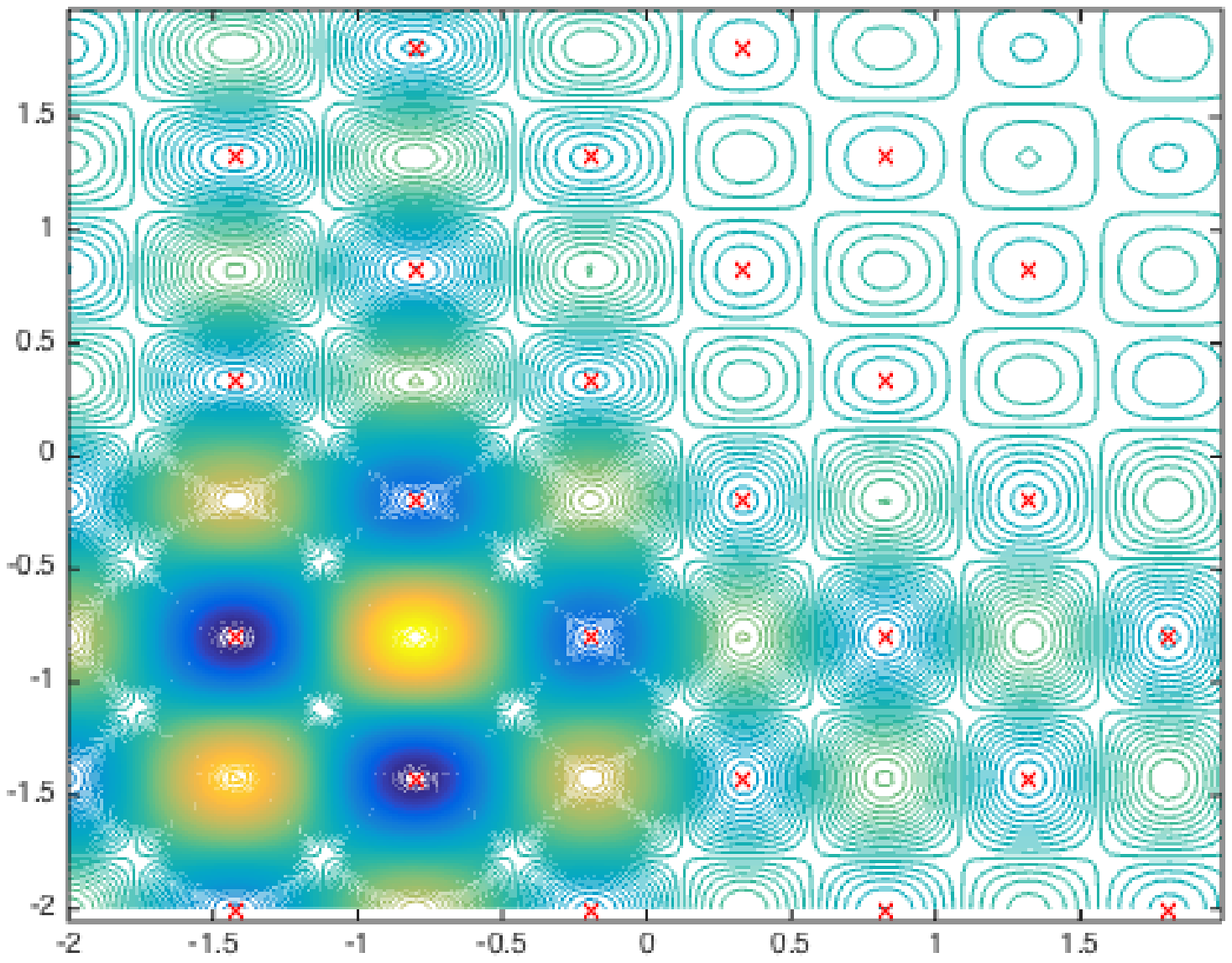}
\vspace{-1cm}
\caption{} \label{fig:3b}
\end{subfigure}
\caption{\small{Shubert Function}}\label{fig:3}
%\includegraphics[scale=0.36]{Min2sad.png}\includegraphics[scale=0.36]{Sad2min.png}
%\vspace{-0.5cm}
%\caption{\small{.} }
\end{figure}
Although our algorithm is designed to find minima and saddles, it could end 
up finding maxima as well, due to the Newton's basins of attractions, which are nontrivial.
Schubert's potential,
\begin{equation}\label{Shubert}
g(x,y)=\left(\sum_{i=1}^5 i\, \cos\big[ (i+1)\,x+  i\big]\right)
\left(\sum_{i=1}^5 i\, \cos\big[ (i+1)\,y+  i\big]\right),
\end{equation}
is represented in Figure \ref{fig:3a}. Figure \ref{fig:3b} is a zoom of 
the contour plot around the global minimizer, and the points are the minimizers 
found by just applying the double descent technique, starting from random initial values. 
While finding the global minimizer by applying a deterministic technique requires a starting 
point in its neighborhood, the ability to explore the landscape eliminates this necessity. 
One single run of our technique, asking for 100 possible critical points,
gave us 45 minima, 3 of which were global, at different locations.
On average, for attempt, we counted one diffusive step and 5.9 local search iterations.

\subsubsection{Biggs Function}
Let us consider the following function:
   \begin{equation}\label{Biggs2} g(X)= \sum_{i=1}^{10} 
   \left(e^{-t_i\, x_1}-5e^{-t_i\, x_2} -y_i\right)^2\end{equation}
 where $t_i=0.1\, i$, $y_i= e^{-t_i}-5e^{10\,t_i}$. \\
There are two critical points: a minimum 
%  (which coincides with the global one) 
at $(1,10)$ and a saddle at $(16.7047,16.7047)$, 
as shown in figure \ref{fig:Biggs_contour}.\\
\begin{figure}[hbt]
  \includegraphics[scale=0.4]{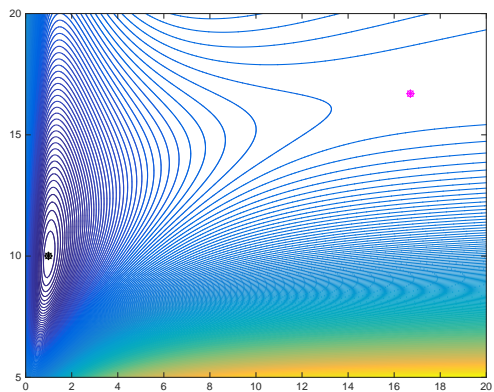}
\caption{\small{Biggs function's critical points: (global) minimum  in black and saddle point in magenta.  }}
\label{fig:Biggs_contour}
\end{figure}

The challenges in this problem are the flat landscape of the potential and the 
presence of narrow regions in which the Hessian is positive (negative) definite, but that do not 
contain a minimum (maximum), as shown in Figure \ref{fig:Biggs2H}.
\begin{figure}[hbt]
\includegraphics[scale=0.4]{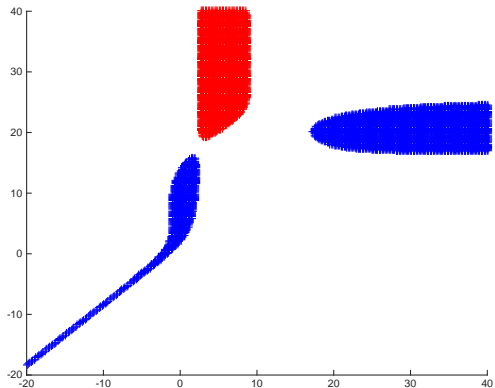}
\caption{\small{Biggs function: in blue the regions of the plane in which the Hessian is positive-definite, 
in red the one in which $H$ is negative-definite; elsewhere $H$ is indefinite. }}
\label{fig:Biggs2H}
\end{figure}
Asking for 2 critical points, we were able to find, in a single run,
both the minimimum and the saddle point. 
On average, we performed 2.5 diffusive steps and $18$ local search iterations.

\subsubsection{Camel Function}
Let us consider the following function:
\begin{equation}\label{Camel}
g(x,y)=\left(4-2.1\ x^2+\frac{x^4}{3}\right)x^2+xy+4(y^2-1)y^2
\end{equation}
This function is a standard test function for global optimization, 
but it is also useful as a test for the mountain passes' search (see  \cite{MoreMunson}).
In fact, our algorithm can be also used to compute mountain passes.  %, which represent the 
%transition states between stable states. %While the original version of the 
%mountain pass theorem was given in \cite{AmRa}, a recent work \cite{MoreMunson} 
As in \cite{MoreMunson}, these are characterized as 
%provides a better characterization: it is a critical point that satisfies the Palais-Smale 
critical points that satisfy the Palais-Smale 
condition and whose Hessian has exactly one negative eigenvalue, that is $n_-(H)=1$.

Consider the region $[-2,2]\times[-1,1]$.  Here, there are $14$ critical points: 
$6$ mountain passes, $6$ minima and $2$ maxima.  Our method has no difficulty
in computing all of these points in one single execution.
Results are summarized in Figure \ref{fig:Camel} and in Table \ref{tab:Camel}.

\begin{figure}[hbt]%\label{goal}
\includegraphics[scale=0.45]{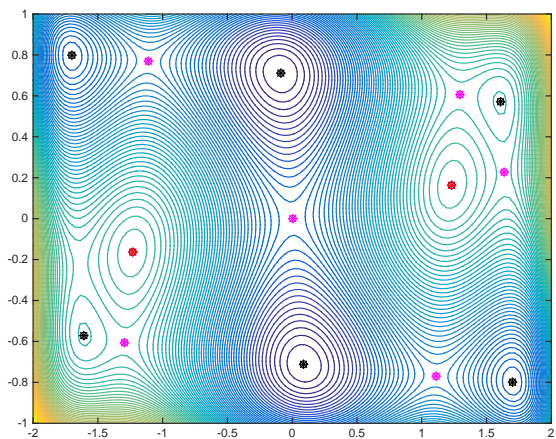}
\caption{\small{Camel Function's critical points. In the region of interest, represented in this figure, 
there are 6 minima (in black), 2 maxima (in red), and 6 mountain passes (in magenta).}}
\label{fig:Camel}
\end{figure}
   \begin{table}[htbp] 
\begin{tabular}{||*{10}{c||}|} 
\hline
 $x$ & $y$ & $f(x,y)$ & spectrum of H \\
 \hline
   0.0898  &  -0.7127 &  -1.0316  &  7.6822 \;  16.4932\\
   -0.0898  &  0.7127  & -1.0316  &  7.6823 \;  16.4932\\
      1.6071  &  0.5687 &   2.1043  &  7.1215  \; 10.0216\\
       -1.6071  & -0.5687 &   2.1043  &  7.1215  \; 10.0216\\
    1.7036 & -0.7961  & -0.2155  & 18.8171 \;  22.6975\\
     -1.7036  &  0.7961 &  -0.2155  & 18.8171 \;  22.6975\\
\hline
    1.2302  &  0.1623  &  2.4963  & -8.0149  \; -5.9537\\
       -1.2302  & -0.1623  &  2.4963  & -8.0149  \; -5.9537\\
       \hline
         0        &    0          &   0           &  -8.0623 \;   8.0623\\
            1.1092  & -0.7683  &  0.5437  & -7.9026  \; 20.3667\\
              -1.1092  &  0.7683  &  0.5437 &  -7.9026 \;  20.3667\\
   1.2961  &  0.6051  &  2.2295  & -6.1772  \;  9.6376\\
 -1.2961  & -0.6051  &  2.2295  & -6.1772  \;  9.6376\\
    1.6381  &  0.2287  &  2.2294  & -5.5458  \; 12.4367\\
   \hline
\end{tabular}
\caption{Camel critical points} \label{tab:Camel} 
\end{table}

\subsubsection{Rosenbrock Function}
This function %was introduced by Howard H. Rosenbrock in 1960, and it is also known 
%as banana function.  It
is given by
\begin{equation}\label{Rosen}
g(x)=\sum_{i=1}^{N-1} \left[100\,(x_{i+1}-x_i^2)^2+(x_i-1)^2\right]\ ,
\end{equation}
and it has a global minimum value of $0$ at at $(1,1,...,1)$, for any value of $N$.
The Hessian is very inexpensive to compute and factor, being
tridiagonal.
The global minimum lies inside a long, narrow, parabolic shaped flat valley; while 
finding the valley is trivial, detecting the global minimizer is not.\\
%Additionally, Rosenbrock's function can be generalized to dimension N. 

We take $N=50$.  A Monte-Carlo gradient-descent technique using $20$ random initial guesses
did not find the global minimizer.  A single implementation 
of our technique with the possibility to find at most $20$ critical points, found
$16$ minimizers, $9$ of which were global. On average, per attempt to find a 
critical point, we counted $1.8$ diffusive steps and $136$ local search steps. 
For comparison sake, the {\tt Matlab} routine {\tt GlobalSearch} found
the global minimizer, whereas the {\tt Matlab} simulated annealing routine 
{\tt simulannealbnd} gave a best value for the minimum of $40.7188$.

\subsection{Chemical Potentials}
An interesting application of global optimization is protein 
folding.  Mathematically, this consists in 
finding the equilibrium configuration of $d$ atoms, assuming that
the forces between the atoms are known.  In the end, one has to find  
the minimizer of a potential energy function depending on $3d$ variables.\\
The Lennard-Jones and Morse clusters are two well-known systems of this kind and they 
have been extensively studied, and the minima tabulated.  % during the years.
For example, the (currently best) global minima
for Lennard-Jones and Morse potential can be found at the database
\cite{Database}.  These results were obtained with the methods presented in
\cite{DLLS} and \cite{LennJ}. Both are ``basin-hopping'' techniques; they 
exploit the funneling structure of the potentials (that is, the global minimizer lies at the bottom
of a monotonically descending sequence of minimizers), and make a number of choices
explicitly based on the specific problem at hand.  For example, the authors perform a continuation
based upon an optimal configuration reached with $d$ atoms to initiate the search for
$(d+1)$ atoms.\\
With no pretense of comparing to these other results,
below we present some of the results we obtained applying our general technique on
both Morse and Lennard-Jones potentials.  These
potentials depend on the mutual distances (in $\R^3$) between the atoms, namely 
$r_{ij}=  \|P_j-P_i\|$, with $1\leq i<j\leq d$ and $P_k=(x_k,y_k,z_k)$, for all $k=1,...,d$.\\
Given obvious symmetries in the problem, 
we imposed the following location constraints:
we fix one atom at the origin ($P_1=(0,0,0)$), another one on the  
$x$ axis ($P_2=(x_2, 0, 0)$), and a third one in the $xy$-plane ($P_3=(x_3, y_3,0)$). 
All other atoms are unconstrained. % $\R^3$ \,($P_k=(x_k,y_k,z_k)$, for all $k=4,..,d$). 
With this setup, each configuration will be identified by the vector of coordinates
$$X= (x_2, x_3, y_3, x_4, y_4, z_4, \ldots, x_d, y_d, z_d) ,$$
and the dimension of the problem becomes $N=3d-6$.\\
In our experiments we compute the gradient analytically, and the Hessian numerically, 
by forward finite differences.

\subsubsection{Lennard-Jones Potential}
This is defined as follows:
\begin{equation}\label{LJ}
V(r)= 4\,\varepsilon \sum_{i<j}^d \,\left[ \left(\frac{\sigma}{r_{ij}}\right)^{12}- 
\left(\frac{\sigma}{r_{ij}}\right)^{6} \right],
\end{equation}
where $\varepsilon$ and $2^{1/6}\sigma$ are the pair equilibrium well depth and 
separation respectively. We take $\varepsilon=\sigma=1$.\\
% It seems that most of the global minima are based on Mackay icosahedra and, 
% therefore, structurally uniform. In this case a gradient descent technique, 
% led by a Monte-Carlo randomization, would converge to the global minima, 
% due to the funneling structure of the landscape. However, the same properties 
% do not hold for $d=38$, $d=75$, and many others, in which the global minimum 
% does not have the Mackay icosahedron structure.
%It seems that most of the global minima are based on Mackay icosahedra and, 
%therefore, structurally uniform. 
This is a problem where a simple gradient descent technique, 
coupled with a Monte-Carlo randomization, performs reasonably well; as a matter
of fact, our own double-descent method quite often automatically reverts to
gradient descent.  That said, 
there are specific configurations where the problem is much more difficult with our
method than with a simple gradient descent (for example, $d=38$, a value where
the optimal configuration does not have a regular
(Mackay) hycosahedral structure).  % and $d=75$.
On this problem, the basin-hopping techniques of \cite{LennJ} is a more effective
way to find the global minima, since the knowledge of the potential landscape is
exploited in the algorithm itself; our method is really a landscape exploration
approach, and our method failed, for example, for $d=38$.  Nevertheless, the method
worked well for smaller values of $d$, as reported in Table \ref{tab:LJ}.

\begin{table}[htbp] 
% \vspace{-5.7cm}  \hspace{5.5cm}
\begin{tabular}{||*{10}{c||}|} 
\hline
& & \\
 $d$ & N & global minima \\
 &  &\\
 \hline
%   &  &\\
  2 & 1 & -1 \\
%   & & \\
 \hline
%   & & \\
    3 & 3 & -3  \\
 %    &  &\\
 \hline
 %  & & \\
   4 & 6 & -6 \\
%    & & \\
 \hline
   5& 9& -9.104 \\
 %   & & \\
 \hline
 6  & 12 & -12.712	\\
  \hline
  7  &  15& -16.505\\
   \hline
  8 & 18& -19.821	\\
   \hline
   9 & 21 & -24.113\\
   \hline
   10 & 24& -28.422\\
   \hline
   11 & 27 & -32.766\\
   \hline
   12 &30  &-37.968\\
   \hline
   13 &33 &-44.327\\
   \hline
   14 & 36 & -47.845\\	
 %& & \\
 \hline
\end{tabular}
\caption{Lennard Jones global minima} \label{tab:LJ} 
\end{table}

\subsubsection{Morse Potential}
The Morse potential 
%named after physicist Philip M. Morse, is a convenient interatomic interaction model 
%for the potential energy of a diatomic molecule. \\
is defined as follows:
\begin{equation}\label{Morse}
V_{\rho}(r)= \sum_{i<j}^d \,\left[e^{\rho(1-r_{ij})}\left( e^{\rho(1-r_{ij})}-2\right)\right]
\end{equation}
where $\rho$ is a parameter which determines the width of the well. 
We treat this problem as truly unconstrained, and this may create difficulties to descent techniques,
since descent directions may well identify points ``at infinity'' (i.e., some coordinates grow
unbounded); e.g., this happens to the {\tt Matlab} code {\tt GlobalSearch}.
A further difficulty is that global minima become harder to locate when $\rho$ increases. 
In Table \ref{tab:Morse}, we report the results of our method for $11$ atoms and 
$\rho=3,6,10,14$; our minima match those of \cite{Database}.
\begin{table}[htbp] 
\begin{tabular}{||*{10}{c||}|} 
\hline
 $\rho$ & global minima \\
 \hline
  3 & -37.930817 \\
 \hline
    6 & -31.521880  \\
 \hline
   10 & -30.265230 \\
 \hline
 14  &  -29.596054\\
 \hline
\end{tabular}
\caption{Morse global minima ($d=11$, $N=27$)} \label{tab:Morse} 
\end{table}
For the sake of comparison, we remark that neither {\tt Matlab} function {\tt GlobalSearch} nor
{\tt simulannealbnd} gave acceptable results.  Namely, they produced the results in Table
\ref{tab:matlab}.
\begin{table}[htbp] 
\begin{tabular}{||*{10}{c||}|} 
\hline
$\rho$ & {\tt GlobalSearch} & {\tt simulannealbnd} \\
\hline
3 & -22.0429 & -11.1367\\
6 & -16.2076 & -3.1483\\
10 & -9 & -1\\
14& -9 & -1\\
  \hline
\end{tabular}
\caption{{\small Results obtained using {\tt Matlab} global optimization toolbox ({\tt GlobalSearch}
and {\tt simulannealbnd}) on Morse Potential ($d=11$, $N=27$).}} 
\label{tab:matlab} 
\end{table}

\subsection{Nonlinear Systems}
Our technique can also be used to solve nonlinear systems. Indeed, a nonlinear system 
\begin{equation}\label{NLS}
S(x)=0, \mbox{ with } x\in \R^n,
\end{equation}
can be reformulated as an optimization problem, simply by considering the objective function 
given by
\begin{equation}\label{gNLS}
g(x)=\frac{1}{2} S(x)^TS(x)
\end{equation}
In this case, $\nabla g(x)=J_S(x)^TS(x)$, where $J_S$ indicates the Jacobian of $S$.
%evaluated at $x$. 
Therefore, the critical points of the objective function $g$, correspond to 
both the zeros of $S(x)$, and the points for which $S(x)$ is 
in the left null space of the Jacobian. % of $S(x)$.\\
%In particular, all the zeros of \eqref{NLS} will be global minimizers for \eqref{gNLS}.
%by definition of $g(x)$. 
\subsubsection{Boggs system}
Given the nonlinear system 
\begin{equation}\label{Boggs}
 \begin{bmatrix} x^2-y+1\\ x-\cos\left(\frac{\pi}{2}y\right)\end{bmatrix}=0, 
 \end{equation}
 we construct the objective function according to \eqref{gNLS}.\\
 The solutions of the problem are $(-1,2)$, $(0,1)$ and $(-\sqrt{2}/2, 3/2)$, 
illustrated in Figure \ref{fig:Boggs-a}.
  %$ \begin{bmatrix}-1\\2\end{bmatrix}$,  $\begin{bmatrix}0\\1\end{bmatrix}$ and 
  %$\begin{bmatrix}-\frac{1}{\sqrt{2}}\\\frac{3}{2}\end{bmatrix}.$
 \begin{figure}[hbt]%\label{goal}
\begin{subfigure}{0.45\textwidth}
\includegraphics[scale=0.4]{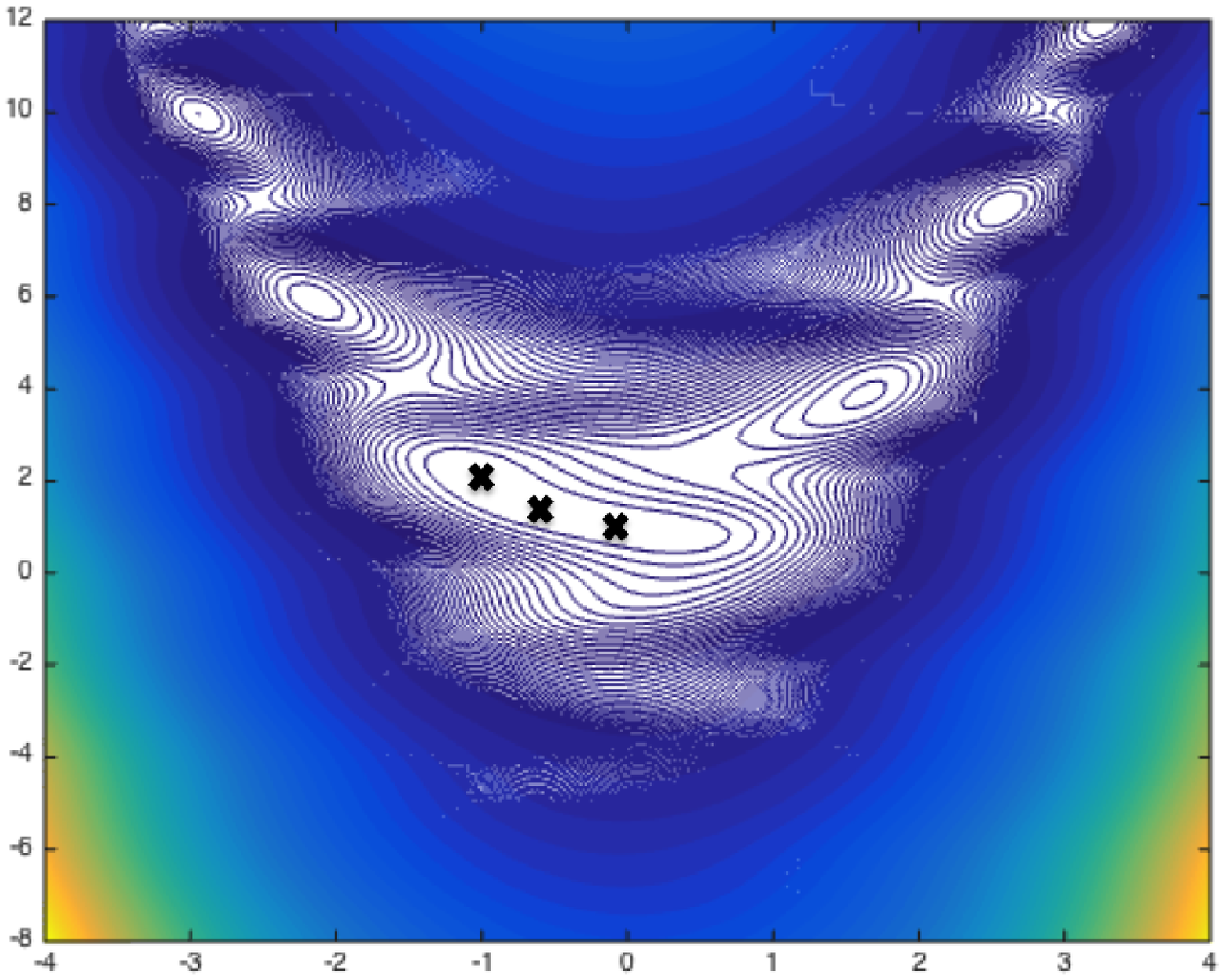}
\vspace{-1cm}
\caption{} \label{fig:Boggs-a}
\end{subfigure}
%\hspace*{\fill} % separation between the subfigures
\begin{subfigure}{0.45\textwidth}
\includegraphics[scale=0.4]{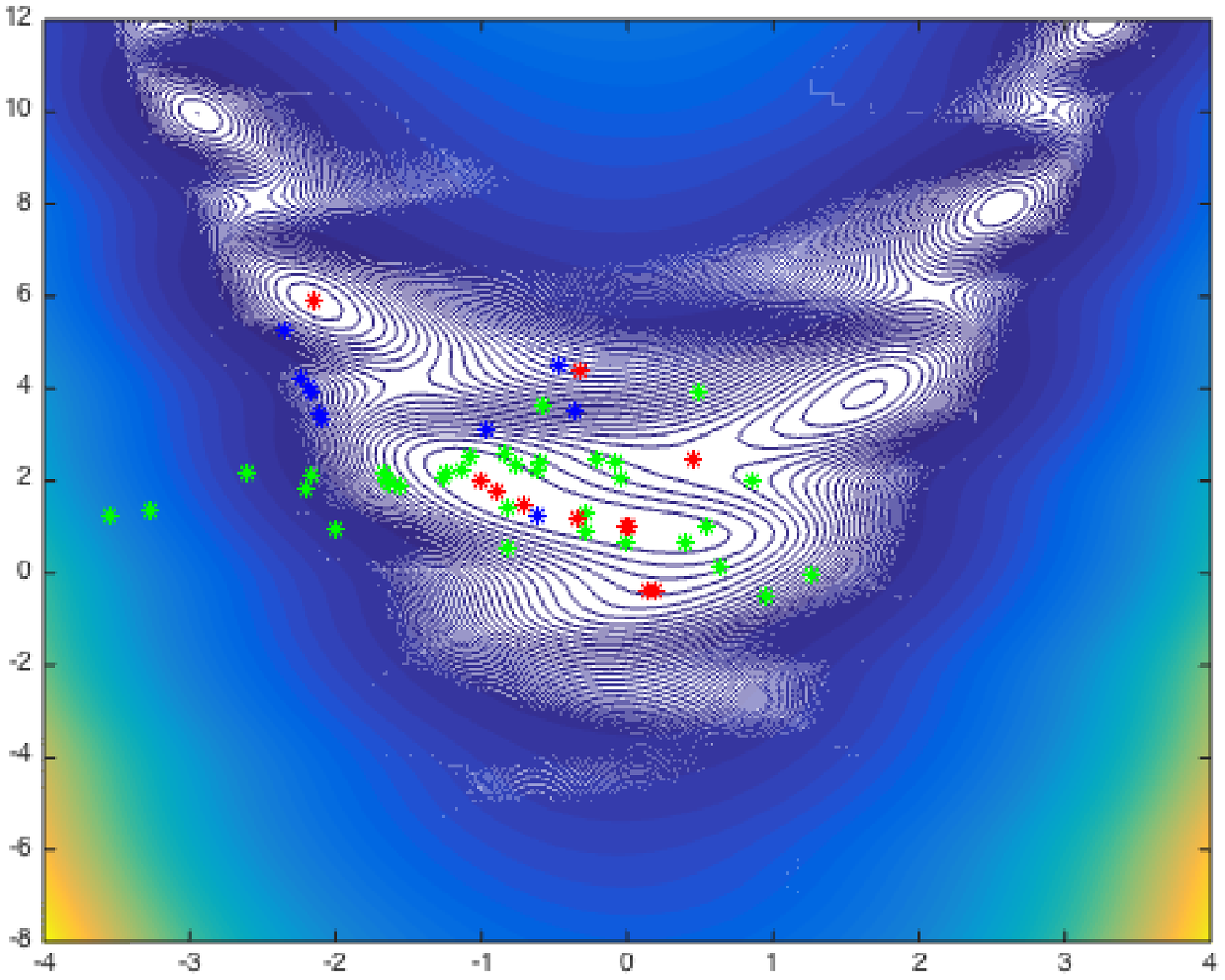}
\vspace{-1cm}
\caption{} \label{fig:Boggs-b}
\end{subfigure}
\caption{\small{Boggs system. (A): contour plot of the potential \eqref{gNLS},
and solutions of the nonlinear system.
%in Figure \ref{fig:Boggs-a};  
(B): critical points of \eqref{gNLS} found by our method; the green dots
correspond to diffusion steps.
%\ref{fig:Boggs-b}
}}
\label{fig:Boggs}
%\includegraphics[scale=0.36]{Min2sad.png}\includegraphics[scale=0.36]{Sad2min.png}
%\vspace{-0.5cm}
%\caption{\small{.} }
\end{figure}

 Numerical results from a single run of the algorithm, asking for at most 20 critical points, 
are shown in Table \ref{tab:Boggs} and illustrated in Figure \ref{fig:Boggs}.
  \begin{table}[htbp] 
\begin{tabular}{||*{10}{c||}|} 
\hline
%& & & \\
 $x$ & $y$ & $g(x,y)$ & id \\
% & & & \\
 \hline
 % & & & \\
  0  &  1   & 0& \\
   -1  &  2  &  0& global minima\\
 -0.7071   & 1.5  &  0& \\
%  & & & \\
 \hline
%  & & & \\
-2.1530  &  5.9055  &  0.7139 & \\
   0.1301 &     -0.3768 &   1.2161 &  local minima \\
   0.1890 & -0.3663 & 1.1941 &\\
  %  & & & \\
     \hline
  %    & & & \\
         -0.8898 &   1.7671  &  0.0013 & \\
            -0.3319  &  1.1830   & 0.0038&  saddle points \\
             0.4555  &  2.4926   & 1.5111& \\
         -0.3277 & 4.3927 & 6.0502 & \\
     %     & & & \\
 \hline
\end{tabular}
\caption{Critical points - Boggs system } \label{tab:Boggs} 
\end{table}

\section{Conclusions}
In this work, we presented a method apt at exploring the landscape of a smooth (at
least $\mathcal{C}^2$) potential, in order to locate global minima.  The new components
of our method are a double-descent technique (to locate minima) 
and a colored intermittent diffusion (to escape basin of attraction of minima and
other critical points).  The idea of the technique is to use Hessian information
in order to bias the exploration of the landscape.  We illustrated performance of our technique on several
problems from the literature, observing that our method is able, in most cases, to adapt
to different features of the potential.  We believe that our method can
be easily taught in an optimization course, along with other well established
techniques.

%\newpage
\sffamily

\end{document}